 \newcommand{\ROM}[1]{\mathrm{\uppercase\expandafter{\romannumeral#1}}}
\numberwithin{equation}{section} \theoremstyle{plain}
\newtheorem{thm}{Theorem}[section]
\newtheorem{lem}{Lemma}[section]
\theoremstyle{definition}
\theoremstyle{remark}
\newtheorem{rem}{Remark}[section]
\newtheorem{ack}{Acknowledgements}   
\title[DDVV-type inequality and Simons-type inequality]
{DDVV-type inequality for skew-symmetric matrices and Simons-type inequality for Riemannian submersions}
\author[J.Q. Ge]{Jianquan Ge}
\address{School of Mathematical Sciences, Laboratory of Mathematics and Complex Systems, Beijing Normal
University, Beijing 100875, P.R. CHINA} \email{jqge@bnu.edu.cn}
\thanks {The author is partially supported by the NSFC (No.11001016), the SRFDP (No. 20100003120003), and the Program for Changjiang Scholars and Innovative Research Team in University.}
\subjclass[2010]{ 53C24, 15A45.}
\date{}
\keywords{Commutator, Riemannian submersion, Simons inequality, Yang-Mills.}
\begin{document}
\maketitle

 \begin{center}
 Dedicated to my advisor Professor Zizhou Tang.
 \end{center}
%%%%%%%%%%%%%%%%%%%%%
\begin{abstract}
In this paper, we will first
derive a DDVV-type optimal inequality for real skew-symmetric matrices,
then we apply it to establish a Simons-type integral inequality for
Riemannian submersions with totally geodesic fibres and Yang-Mills horizontal distributions. In this way, we show phenomenons of duality between Submanifold geometry and Riemannnian submersion, particularly between second fundamental form of a submanifold and integrability tensor of a Riemannian submersion.
\end{abstract}
%%%%%%%%%%%%%%%%%%%%%%%

%%%%%%%%%%%%%%%%%%%%%%
\section{Introduction}\label{sec1}
Let $M^n$ be an immersed submanifold of a real space form
$N^{n+m}(c)$ of constant sectional curvature $c$. Given an
orthonormal basis $\{e_1,\cdots,e_n\}$ (resp.
$\{\xi_1,\cdots,\xi_m\}$) of $T_pM$ (resp. $T^{\bot}_pM$), the
normalized scalar curvature $\rho$ and the normal scalar curvature
$\rho^{\bot}$ of $M^n$ at p are defined by
\[\rho=\frac{2}{n(n-1)}\sum^n_{1=i<j}\langle R(e_i,e_j)e_j,e_i\rangle,\]
\[\rho^{\bot}=\frac{2}{n(n-1)}\Big(\sum^n_{1=i<j}\sum^m_{1=r<s}\langle R^{\bot}(e_i,e_j)\xi_r, \xi_s\rangle^2\Big)^{\frac{1}{2}}=\frac{2}{n(n-1)}|R^{\bot}|,\]
where $R$ and $R^{\bot}$ are curvature tensors of the tangent and
normal bundles of $M$ respectively. Denote by $h$ the second
fundamental form and
$H=\frac{1}{n}Tr(h)=\frac{1}{n}\sum_{i=1}^nh(e_i,e_i)$ the mean
curvature vector field. The DDVV conjecture raised by \cite{DDVV}
says that there is a pointwise inequality among $\rho$,
$\rho^{\bot}$ and $|H|^2$ as the following:
\begin{equation}\label{DDVV ineq}
\rho+\rho^{\bot}\leq |H|^2+c.
\end{equation}
Due to the Gauss and Ricci equations, this conjecture can be
translated into the following algebraic inequality (cf. \cite{DFV}):
\begin{equation}\label{DDVV sym}
\sum_{r,s=1}^m\|[B_r,B_s]\|^2\leq \Big(\sum_{r=1}^m\|B_r\|^2\Big)^2,
\end{equation}
where $\{B_1, \cdots ,B_m\}$ are arbitrary real symmetric $(n\times
n)$-matrices, $[\cdot,\cdot]$ is the commutator operator and
$\|\cdot\|$ is the standard norm of matrix. In fact, putting $B_r=S_{\xi_r}-\langle H, \xi_r\rangle id$, where $S_{\xi_r}$ is the shape operator in direction $\xi_r$, we have
\begin{equation}\label{normal-shape}
|H|^2-\rho+c=\frac{1}{n(n-1)}\sum_{r=1}^m\|B_r\|^2,\quad\rho^{\bot}=\frac{1}{n(n-1)}\left(\sum_{r,s=1}^m\|[B_r,B_s]\|^2\right)^{\frac{1}{2}}.
\end{equation}

The inequality (\ref{DDVV sym}) (and thus the DDVV conjecture
(\ref{DDVV ineq})) has been proved independently and differently by
\cite{GT,Lu}. In particular, the equality condition given
in \cite{GT} shows that the inequality (\ref{DDVV sym}) is an
optimal inequality. As for the classification problem of
submanifolds attaining the equality of (\ref{DDVV ineq}) everywhere,
we refer to \cite{DT} for a big advance. In this paper, we obtain the following DDVV-type optimal inequality
for real skew-symmetric matrices which has been included as a part of the author's thesis and previously reviewed in the survey paper
\cite{GT2}.

Throughout this paper, a $K:=O(n)\times O(m)$ action on
$(B_1,\cdots,B_m)$ means that
$$(P,R)\cdot
(B_1,\cdots,B_m):=(PB_1P^t,\cdots,PB_mP^t)\cdot R,\quad for~~
(P,R)\in K.$$
\begin{thm}\label{thm1}
Let $B_1,\cdots,B_m$ be ($n\times n$) real skew-symmetric matrices.\\
$(i)$ If $n=3$, then we have \[\sum_{r,s=1}^m\|[B_r,B_s]\|^2\leq
\frac{1}{3}\Big(\sum_{r=1}^m\|B_r\|^2\Big)^2,\] where the equality
holds if and only if under some $K$ action all $B_r$'s are zero
except for $3$ matrices which can be written as
\[C_1:=\left(\begin{array}{ccc}0&
\lambda& 0\\-\lambda& 0& 0\\0& 0& 0
\end{array}\right),\quad C_2:=\left(\begin{array}{ccc}0&
0& \lambda\\0& 0& 0\\-\lambda& 0& 0
\end{array}\right),\quad C_3:=\left(\begin{array}{ccc}0& 0& 0\\0&
0& \lambda\\0& -\lambda& 0
\end{array}\right).\]
$(ii)$ If $n\geq 4$, then we have \[\sum_{r,s=1}^m\|[B_r,B_s]\|^2\leq
\frac{2}{3}\Big(\sum_{r=1}^m\|B_r\|^2\Big)^2,\] where the equality
holds if and only if under some $K$ action all $B_r$'s are zero
except for $3$ matrices which can be written as $diag(D_1, 0)$,
$diag(D_2, 0)$, $diag(D_3, 0)$, where $0\in M(n-4)$ is the zero
matrix of order $n-4$ and
\[D_1:=\left(\begin{array}{cccc}0& \lambda& 0&0\\-\lambda& 0& 0&0\\0&
0& 0&\lambda\\0&0&-\lambda&0
\end{array}\right),
D_2:=\left(\begin{array}{cccc}0& 0&\lambda& 0\\0& 0&
0&-\lambda\\-\lambda& 0& 0&0\\0&\lambda&0&0
\end{array}\right),
D_3:=\left(\begin{array}{cccc}0& 0&0&\lambda\\0& 0&\lambda&0\\0&
-\lambda& 0&0\\-\lambda&0&0&0
\end{array}\right).\]
\end{thm}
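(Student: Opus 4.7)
The plan is to adapt the strategy of \cite{GT} for the symmetric DDVV inequality, replacing the spectral decomposition of symmetric matrices by the block canonical form of skew-symmetric ones (block diagonal with $2\times 2$ rotation blocks).

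\textbf{Step 1 (Normalization).} Using the $O(m)$-part of the $K$-action, I would arrange $\{B_r\}$ to be Frobenius-orthogonal and reorder so that $\|B_1\|$ is the largest. Then with $P\in O(n)$ I would put $B_1$ into canonical skew-symmetric form, which is the skew-symmetric analogue of the ``diagonalize $B_1$'' step of \cite{GT}.

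\textbf{Step 2 (Case $n=3$).} Under the Lie algebra isomorphism $\mathfrak{so}(3)\cong(\R^3,\times)$, each $B_r$ corresponds to $v_r\in\R^3$ with $\|B_r\|^2=2|v_r|^2$ and $\|[B_r,B_s]\|^2=2|v_r\times v_s|^2$. The inequality becomes
\[\sum_{r,s}|v_r\times v_s|^2\le \frac{2}{3}\Big(\sum_r|v_r|^2\Big)^2,\]
and using $|v\times w|^2=|v|^2|w|^2-\langle v,w\rangle^2$ this reduces to $\|A\|_F^2\ge \tfrac{1}{3}\mathrm{tr}(A)^2$ for the Gram matrix $A=(\langle v_r,v_s\rangle)$. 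Since $A\succeq 0$ has rank at most $3$, Cauchy--Schwarz on its at-most-three nonzero eigenvalues yields the bound, with equality iff three eigenvalues coincide and the rest vanish. Unwinding gives three orthogonal $v_r$'s of equal length, producing the $C_1,C_2,C_3$ triple after an $O(m)$ rotation.

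\textbf{Step 3 (Case $n=4$).} I would exploit the accidental isomorphism $\mathfrak{so}(4)=\mathfrak{so}_+(4)\oplus\mathfrak{so}_-(4)$, an orthogonal direct sum of two commuting copies of $\mathfrak{so}(3)$. Writing $B_r=B_r^++B_r^-$, the commutators decouple and the two pieces are Frobenius-orthogonal, so
\[\sum_{r,s}\|[B_r,B_s]\|^2=\sum_{r,s}\|[B_r^+,B_s^+]\|^2+\sum_{r,s}\|[B_r^-,B_s^-]\|^2.\]
Inside each $\mathfrak{so}_\pm(4)\cong\mathfrak{so}(3)$, a natural basis of quaternionic generators (for the self-dual part, $D_1,D_2,D_3$) has squared norm $4$ and structure constants $\pm 2$, which rescales the $n=3$ bound of Step 2 to $\tfrac{2}{3}$ inside each factor. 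Combining with $(a+b)^2\ge a^2+b^2$ for $a,b\ge 0$ yields the bound; equality forces one of $B_r^\pm$ to vanish and the other to attain the sharp $\mathfrak{so}(3)$-configuration, producing $D_1,D_2,D_3$ up to a $K$-action.

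\textbf{Step 4 (General $n\ge 5$) and main obstacle.} For larger $n$, the $\mathfrak{so}_+\oplus\mathfrak{so}_-$ splitting is unavailable. I would reduce to Step 3 via a variational argument: the functional $\sum_{r,s}\|[B_r,B_s]\|^2$ on the compact sphere $\sum_r\|B_r\|^2=1$ attains its maximum at a critical point satisfying the Lagrange equation $\sum_s[[B_r,B_s],B_s]=\mu B_r$; combined with the block canonical form of $B_1$ from Step 1, a rank/support analysis of the joint action of $\{B_r\}$ should confine their common support to a $4\times 4$ invariant subspace, reducing the problem to Step 3. This block-confinement is the crux of the proof: unlike Steps 2--3, it cannot be handled by a clean algebraic identity and requires a subtle interplay between the Lagrange equations, the canonical form, and the block structure of commutators.
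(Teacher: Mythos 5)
Your Steps 1--3 are sound, and for $n=3,4$ they constitute a genuinely different and rather clean route: the paper never uses the identification $\mathfrak{so}(3)\cong(\mathbb{R}^3,\times)$ as a Gram-matrix/Cauchy--Schwarz argument, nor the self-dual/anti-self-dual splitting $\mathfrak{so}(4)=\mathfrak{so}_+\oplus\mathfrak{so}_-$; your rescaling of the structure constants (orthonormal quaternionic generators $D_i/2$ bracket exactly like the cross product, doubling the $n=3$ constant $\tfrac13$ to $\tfrac23$) and the estimate $a^2+b^2\le(a+b)^2$ correctly give both the bound and the equality configuration for $n=4$.

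The genuine gap is Step 4, i.e.\ all of part (ii) for $n\ge5$, which you yourself flag as unresolved. The Lagrange condition $\sum_s[[B_r,B_s],B_s]=\mu B_r$ together with a canonical form of $B_1$ does not by itself confine the joint support to a $4\times4$ block: the functional $\sum_{r,s}\|[B_r,B_s]\|^2$ on the sphere $\sum_r\|B_r\|^2=1$ has many critical points of full support (e.g.\ configurations proportional to the full coordinate family $\{\tilde{E}_{ij}\}$, which by the computation of $f_{I_N}$ in the paper lie strictly below the bound), so a ``rank/support analysis'' must rule out that some such non-block critical point is the maximizer --- and that is precisely the content of the theorem, not a reduction of it. The paper fills exactly this hole by a different mechanism: it converts the statement into the inequality $\sum_{\alpha,\beta}x_\alpha x_\beta\|[\tilde{Q}_\alpha,\tilde{Q}_\beta]\|^2\le\tfrac23\bigl(\sum_\alpha x_\alpha\bigr)^2$ over $x\in\mathbb{R}^N_+$ and orthonormal bases $\{\tilde{Q}_\alpha\}$ of $\mathfrak{o}(n)$ (via the exterior-power map $\varphi$ and diagonalization of $BB^t$), and then runs a continuity method in $Q\in SO(N)$ whose a priori estimate rests on three quantitative lemmas with no analogue in your plan: $\sum_{\beta\in J_\alpha}\bigl(\|[\tilde{Q}_\alpha,\tilde{Q}_\beta]\|^2-\tfrac23\bigr)\le\tfrac23$ (Lemma \ref{lem3}, proved by a unitary change of variables and the elementary Lemma \ref{lem2}), $\sum_\beta\|[\tilde{Q}_\alpha,\tilde{Q}_\beta]\|^2=n-2$ (Lemma \ref{lem4}), and the two-matrix bound $\|[A,B]\|^2\le\|A\|^2\|B\|^2$ with its equality case (Lemma \ref{lem5}). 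Unless you supply estimates of this strength (or an actual proof of the block-confinement claim), your argument proves the theorem only for $n\le4$.
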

In sight of the geometric origin of the inequality (\ref{DDVV sym}),
\emph{i.e.}, the DDVV pointwise inequality (\ref{DDVV ineq}) in submanifold
geometry, we get interested in applications to geometry of this
``dual" algebraic inequality. In contrast to symmetric matrices, skew-symmetric
matrices have little geometric background, and much less for their commutators.
Finally, we focus on the geometry of Riemannian submersions which in some sense is also a
``dual" theory of submanifold geometry. It turns out rather
inspiring that, in analogy with symmetric matrices representing the second fundamental form of a submanifold,
skew-symmetric matrices can represent the integrability tensor of a Riemannian submersion. Therefore, one can trivially get an analogous DDVV-type pointwise inequality for Riemannian submersions once some similar ``normal scalar curvature" would be defined from commutators of these skew-symmetric matrices as in (\ref{normal-shape}).  Fortunately, motivated by the works of \cite{chern,Lu,Simons} where the symmetric
matrix inequality (\ref{DDVV sym}) takes an important role in the proof of the well-known Simons integral
inequality for closed minimal submanifolds in spheres, we find so for the skew-symmetric matrix
inequality in deducing a Simons-type integral inequality for Riemannian submersions with totally geodesic fibres and Yang-Mills horizontal distributions. In order to state the result
we first recall some notions about Riemannian submersions. The
notions in Chapter 9 of the book \cite{Be} will be used throughout this
paper.

Let $M^{n+m}$ and $B^n$ be (connected) Riemannian manifolds. A
smooth map $\pi: M\rightarrow B$ is called a \emph{Riemannian
submersion} if $\pi$ is of maximal rank and $\pi_{*}$ preserves the
lengths of horizontal vectors, \emph{i.e.}, vectors orthogonal to
the fibre $\pi^{-1}(b)$ for $b\in B$. Let $\mathscr{V}$ denote the
\emph{vertical distribution} consisting of vertical vectors (tangent
to the fibres) and $\mathscr{H}$ denote the \emph{horizontal
distribution} consisting of horizontal vectors on $M$. The
corresponding projections from $TM$ to $\mathscr{V}$ and
$\mathscr{H}$ are denoted by the same symbols. For Riemannian
submersions there are two fundamental tensors $T$ and $A$ on $M$
defined by O'Neill \cite{O} as follows. For vector fields $E_1$ and
$E_2$ on $M$,
\begin{equation}\label{A T}
\begin{array}{ll}
T_{E_1}E_2:=\mathscr{H}D_{\mathscr{V}E_1}\mathscr{V}E_2+\mathscr{V}D_{\mathscr{V}E_1}\mathscr{H}E_2,\\
A_{E_1}E_2:=\mathscr{H}D_{\mathscr{H}E_1}\mathscr{V}E_2+\mathscr{V}D_{\mathscr{H}E_1}\mathscr{H}E_2,&
\end{array}
\end{equation}
where $D$ is the Levi-Civita connection on $M$. In fact, $T$ is the
second fundamental form along each fibre if it is restricted to
vertical vectors, while $A$ measures the obstruction to
integrability of the horizontal distribution $\mathscr{H}$ and hence
it is called the \emph{integrability tensor} of $\pi$. Moreover,
some analogues of the Gauss-Codazzi equations for a Riemannian
submersion obtained by O'Neill \cite{O} are expressed in terms of
$T$ and $A$ as well as their covariant derivatives. These equations will be recovered in Section
\ref{simons-sec} by moving frame method, which is an effective method rarely used to the study of Riemannian submersions (cf. \cite{CG, Shen}) though widely adopted in submanifold geometry. More details
about $T$ and $A$ can be found in \cite{Be,O}.

Next we introduce the notion of \emph{Yang-Mills}
which has been intensely studied both in physics and in mathematics
and also found important for Einstein Riemannian submersions (see
for example \cite{At-Hi-Si,Be,Don,Tian} and references
therein). Here we use the presentation given in \cite{Be}. Let
$X_1,\cdots,X_n$ be a local orthonormal basis of the horizontal
distribution $\mathscr{H}$. Define a co-differential operator
$\check{\delta}$ over tensor fields on $M$ by
\begin{equation*}
\check{\delta}E:=-\sum_{i=1}^n(D_{X_i}E)_{X_i}.
\end{equation*}
Then we say that $\mathscr{H}$ satisfies the \emph{Yang-Mills}
condition if, for any vertical vector $U$ and any horizontal vector
$X$, we have
\begin{equation*}
\langle\check{\delta}A(X),U\rangle-\langle A_X,T_U\rangle=0,
\end{equation*}
where the bracket $\langle\cdot,\cdot\rangle$ denotes the metric of $M$ and also its induced metric on tensors. As pointed out in
\cite{Be}, this condition depends only on $\mathscr{H}$ and the
metric of $B$ and not on the family of metrics on the fibres. When the
fibres are totally geodesic, \emph{i.e.}, $T=0$, this condition is
equivalent to
$$\check{\delta}A=0.$$ And in this case, it is one of the three sufficient and
necessary conditions for $M$ to be Einstein (see (\ref{ricci-curv})). Furthermore, motivated by the equation (\ref{einsteinM}), we can also regard this condition as a dual of the minimality condition, or its first derivative, for a submanifold in a sphere. A natural interaction between Yang-Mills connections and minimal submanifolds has been investigated by Tian \cite{Tian}.

To be coherent with that in \cite{Be}, we define the square norm of $A$ by
\begin{equation}\label{A norm}
|A|^2:=\sum_{i,j=1}^n\langle A_{X_i}X_j,A_{X_i}X_j\rangle=\sum_{i=1}^n\sum_{r=1}^m\langle A_{X_i}U_r,A_{X_i}U_r\rangle,
\end{equation}
where $\{U_1,\cdots,U_m\}$ is a local orthonormal basis of the
vertical distribution $\mathscr{V}$. This invariant is just our target in the integrand of the Simons-type integral inequality corresponding to the square norm of the second fundamental form in the original Simons inequality in submanifold geometry. Besides several references
cited in \cite{Be}, it is noteworthy that this invariant has
been also studied by Chen (\cite{Chen}, \emph{etc.}) who denoted it
by $\breve{A}_{\pi}$ and obtained its sharp upper bound for an
arbitrary isometric immersion from $M$ (with totally geodesic
fibres) into a unit sphere in terms of square norm of the mean
curvature of the immersion.

Now we are ready to state the Simons-type integral inequality and some equality characterizations as follows.
For $x\in M$, we denote by $\check{\kappa}(x)$ the largest eigenvalue of
the curvature operator $\check{R}:\bigwedge^2TB\rightarrow\bigwedge^2TB$ of $B$ at $\pi(x)\in B$,
$\check{\lambda}(x)$ the lowest eigenvalue of the Ricci curvature
$\check{r}$ of $B$ at $\pi(x)\in B$ (thus $\check{\kappa}$, $\check{\lambda}$ are constant along any fibre), and $\hat{\mu}(x)$ the
largest eigenvalue of the Ricci curvature $\hat{r}$ of the fibre at
$x$.
\begin{thm}\label{Thm-simonstype ineq}
Let $\pi: M^{n+m}\rightarrow B^n$ be a Riemannian submersion with
totally geodesic fibres and Yang-Mills horizontal distribution, i.e., $T=0$ and $\check{\delta}A=0$.
Suppose that $M$ is closed. Then the following cases hold:
\begin{itemize}
\item[(i)] If $n=2$, then we have
\begin{equation*}
\int_M~|A|^2\hat{\mu}~dV_M\geq0;
\end{equation*}
\item[(ii)] If $m=1$, then we have
 \begin{equation*}
  \int_M~|A|^2(\check{\kappa}-\check{\lambda})~dV_M\geq0;
\end{equation*}
\item[(iii)] If $m\geq2$ and $n=3$, then we have
 \begin{equation*}
  \int_M~|A|^2(\frac{1}{6}|A|^2+2\hat{\mu}+\check{\kappa}-\check{\lambda})~dV_M\geq0;
\end{equation*}
\item[(iv)] If $m\geq2$ and $n\geq4$, then we have
 \begin{equation*}
  \int_M~|A|^2(\frac{1}{3}|A|^2+2\hat{\mu}+\check{\kappa}-\check{\lambda})~dV_M\geq0.
\end{equation*}
\end{itemize}
 Moreover, if $A\neq0$, or equivalently, $M$ is not locally a
Riemannian product $B\times F$, then we have the following
conclusions about the equality conditions:
\begin{itemize}
\item[(a)] In each case, if the equality holds, then each fibre has flat normal
bundle in $M$ and $|A|^2\equiv Const=:C>0$, which implies further
the following:
\begin{itemize}
\item[(a1)] In case (i),  $\hat{\mu}\equiv0$;
\item[(a2)] In case (ii), $\check{\kappa}-\check{\lambda}\equiv0$;
\item[(a3)] In case (iii), $\hat{\mu}\equiv\frac{1}{12}C$,
$\check{\kappa}-\check{\lambda}\equiv\frac{-1}{3}C$;
\item[(a4)] In case (iv), $\hat{\mu}\equiv\frac{1}{6}C$,
$\check{\kappa}-\check{\lambda}\equiv\frac{-2}{3}C$.
\end{itemize}
\item[(b)] If the equality in (iii) or (iv) holds, then $m\geq3$ and at each point of $M$
there exist an orthonormal vertical basis $\{U_1,\cdots,U_m\}$ and
an orthonormal horizontal basis $\{X_1,\cdots,X_n\}$ such that the
$(n\times n)$ skew-symmetric matrices
$$A^r:=\Big(\langle A_{X_i}U_r,X_j\rangle\Big)_{n\times n},\quad r=1,\cdots,m,$$
are in the forms of the matrices in the equality conditions of (i)
or (ii) of Theorem \ref{thm1} respectively. Furthermore, under
these basis, the following decompositions hold
\begin{eqnarray}
&&\hat{r}=\hat{\mu}I_3\oplus \hat{r}', \nonumber\\
&&\check{R}\equiv\check{\kappa}I_3,\quad
\check{r}\equiv2\check{\kappa}I_3,\quad in~~case~~(iii),\nonumber\\
&&\check{R}=\check{\kappa}I_6\oplus \check{R}',\quad
\check{r}\equiv\check{\lambda}I_4\oplus\check{r}',\quad
in~~case~~(iv),\nonumber
\end{eqnarray}
where $\hat{r}'=\hat{r}|_{span\{U_4,\cdots,U_m\}}$,
$\check{R}'=\check{R}|_{span\{X_i\wedge X_j|1\leq i\leq n,~5\leq
j\leq n\}}$, and $\check{r}'=\check{r}|_{span\{X_5,\cdots,X_n\}}$.
In particular, when $m=3$, the fibres have constant
sectional curvature. Similarly, when $3\leq n\leq5$, the base manifold $B^n$
has constant sectional curvature. More precisely and specifically,
we have the following (c-d).
\item[(c)] When $m=3$, if the equality in (iii) holds, then there
exist some $a>0$ such that
\begin{itemize}
\item[(c1)] all fibres are isometric to a manifold $F^3$ of constant
sectional curvature $a$;
\item[(c2)] the base manifold $B^3$ has constant sectional curvature $8a$;
\item[(c3)] the following identities hold:
\begin{eqnarray}
&&|A|^2\equiv24a,\nonumber\\
&&K_{rs}\equiv a,\quad K_{ij}\equiv-4a,\quad
K_{ir}=\Big\{\begin{array}{ll} 0&
for~~(i,r)=(1,3),(2,2),(3,1)\\4a&otherwise,
\end{array}\nonumber\\
&&R_{rs}\equiv10a\delta_{rs},\quad R_{ij}\equiv0,\quad
R_{ir}\equiv0,\nonumber
\end{eqnarray}
where $K_{rs}$, $K_{ij}$, $K_{ir}$ (resp. $R_{rs}$, $R_{ij}$,
$R_{ir}$) are sectional curvatures (resp. Ricci curcatures) of $M$
on the $2$-planes spanned by $\{U_r,U_s\}$, $\{X_i,X_j\}$,
$\{X_i,U_r\}$, respectively, under the basis $\{U_r\}$ and $\{X_i\}$
given in case (b).
\end{itemize}
\item[(d)] When $m=3$, if the equality in (iv) holds, then there
exist some $a>0$ such that all fibres are isometric to a manifold
$F^3$ of constant sectional curvature $a$. In addition,
\begin{itemize}
\item[(d1)] if $n=4$, then the submersion $\pi$ is covered by the
Hopf fibration $\pi_0:S^7(\frac{1}{\sqrt{a}})\rightarrow
S^4(\frac{1}{2\sqrt{a}})$, i.e., there are two covering maps
$\pi_1: S^7(\frac{1}{\sqrt{a}})\rightarrow M^7$ and $\pi_2: S^4(\frac{1}{2\sqrt{a}})\rightarrow B^4$  such that
$\pi_2\circ\pi_0=\pi\circ\pi_1$;
\item[(d2)] if $n=5$, then the base manifold $B^5$ has constant sectional
curvature $\frac{8}{3}a$, and the following identities hold (with the
same notations as in (c3)):
\begin{eqnarray}
&&|A|^2\equiv12a,\nonumber\\
&&K_{rs}\equiv a,\quad
K_{ij}=\Big\{\begin{array}{ll}
\frac{-1}{3}a & for~~1\leq i< j\leq4\\
\frac{8}{3}a & for~~1\leq i< j=5,
\end{array}\quad
K_{ir}=\Big\{\begin{array}{ll}
a & for~~1\leq i\leq4\\
0 & for~~i=5,
\end{array}\nonumber\\
&&R_{rs}\equiv6a\delta_{rs},\quad
R_{ij}=\Big\{\begin{array}{ll}
\frac{14}{3}a\delta_{ij} & for~~1\leq i,j\leq4\\
\frac{32}{3}a\delta_{ij} & for~~1\leq i\leq j=5,
\end{array}\quad
R_{ir}\equiv0.\nonumber
\end{eqnarray}
\end{itemize}
\end{itemize}
\end{thm}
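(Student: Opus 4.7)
The plan is to establish a Simons-type pointwise identity for $|A|^2$ on $M$ and then invoke Theorem \ref{thm1} to control the resulting commutator term, mirroring the classical Simons argument for minimal submanifolds. In this analogy the integrability tensor $A$ replaces the second fundamental form $h$, the skew-symmetric matrices $A^r := (\langle A_{X_i}U_r, X_j\rangle)$ replace the shape operators $B_r$, and the hypotheses $T=0$ together with $\check{\delta}A=0$ play the roles of minimality and the Codazzi identity. The first step is therefore to develop a moving-frame calculus for the submersion, re-derive the O'Neill equations, and extract the Ricci identities needed for the second covariant derivatives of $A$; this is precisely what Section \ref{simons-sec} is to provide.

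Starting from the Bochner formula $\tfrac{1}{2}\Delta|A|^2 = |\nabla A|^2 + \langle A,\Delta A\rangle$, I would commute derivatives via the Ricci identity and rewrite mixed curvatures using the O'Neill equations; the assumptions $T=0$ and $\check{\delta}A=0$ cause several awkward cross terms to cancel. I expect a Simons-type identity of the schematic form
\[
\tfrac{1}{2}\Delta|A|^2 = |\nabla A|^2 - \sum_{r,s=1}^{m}\|[A^r,A^s]\|^2 + \mathcal{Q}(A;\hat{r},\check{R},\check{r}),
\]
where $\mathcal{Q}$ is quadratic in $A$ with coefficients built from the fibre Ricci tensor and the base curvature operator and Ricci tensor. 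Bounding each coefficient using the eigenvalue inequalities $\hat{r}\leq \hat{\mu}$, $\check{R}\leq\check{\kappa}$ and $\check{r}\geq\check{\lambda}$, and bounding the commutator term by Theorem \ref{thm1} (with factors $\tfrac{1}{3}$ for $n=3$ and $\tfrac{2}{3}$ for $n\geq 4$), gives a pointwise inequality for $|A|^2$. Integrating against $dV_M$ annihilates the Laplacian by closedness of $M$, yielding cases (iii) and (iv). Cases (i) and (ii) are the degenerate specializations in which the commutator term vanishes identically: for $n=2$ every $2\times 2$ skew-symmetric matrix is a scalar multiple of the symplectic form, so the $A^r$ commute, and for $m=1$ there is only one matrix to commute.

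For the equality analysis, equality forces $\nabla A\equiv 0$ (so $|A|^2$ is constant and each fibre has flat normal bundle in $M$) together with pointwise equality in Theorem \ref{thm1}. The classification in Theorem \ref{thm1} then delivers the orthonormal bases and the normal forms $A^r$ claimed in part (b); substituting these back into $\mathcal{Q}$ pins down $\hat{\mu}$ and $\check{\kappa}-\check{\lambda}$ as in (a1)--(a4). The decompositions of $\hat{r}$, $\check{R}$, $\check{r}$ emerge by reading off which indices are left free by the extremal configurations $C_i$, $D_i$; on the small-dimensional pieces this forces constant sectional curvature, giving (c1)--(c2) and the base-curvature claim in (d2). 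The numerical tables in (c3) and (d2) then follow by direct substitution of the explicit algebraic model into the O'Neill curvature formulas. The identification of $\pi$ with a covering of the Hopf fibration in (d1) is of a different flavour: the constants computed in (a) and (b) already force $M^7$ to have constant sectional curvature $a$ and $B^4$ to have constant sectional curvature $4a$, so each is covered by the corresponding round sphere, after which a uniqueness argument for Riemannian submersions with totally geodesic fibres, parallel Yang-Mills connection and these pinned-down curvatures identifies $\pi$ with a covering of the canonical Hopf submersion.

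The main obstacle is the middle step, the derivation of the Simons-type formula with the precise coefficients that later match the statement. The second-derivative O'Neill identities are considerably more intricate than Codazzi's equation in the submanifold setting, and one must track the mixed horizontal--vertical terms carefully, exploiting both $T=0$ and $\check{\delta}A=0$, in order to isolate exactly $\sum_{r,s}\|[A^r,A^s]\|^2$ as the \emph{Simons term}. A secondary difficulty is the rigidity step in (d1), which requires a genuine uniqueness input for submersions beyond pointwise algebraic matching of the extremal model.
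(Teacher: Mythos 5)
Your overall strategy (moving-frame O'Neill calculus, a Bochner/Simons identity for $|A|^2$, Theorem \ref{thm1} to control the commutator term, integration over the closed $M$) is the right family of ideas, but the middle step you flag as the ``main obstacle'' is a genuine gap, not just a computation to be filled in, and your schematic identity is not the correct one. The paper does \emph{not} use the plain Bochner formula $\tfrac12\triangle|A|^2=|\nabla A|^2+\langle A,\triangle A\rangle$: it computes the horizontal and vertical rough Laplacians separately (\ref{lap-h-A}), (\ref{lap-v-A}) and then takes the specific weighted combination $(\tfrac12\triangle^{\mathscr H}+2\triangle^{\mathscr V})|A|^2$ in (\ref{laph-lapv}). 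The weight $4$ on $\langle A,\triangle^{\mathscr V}A\rangle$ is forced: it is exactly what makes the sign-indefinite mixed term $A^r_{ij}A^s_{ik}R_{srkj}$ combine with $4|A^r_{ijs}|^2$ (via (\ref{o2'}) and (\ref{Arijs-identity}), i.e.\ $A^r_{ijs}=\tfrac12([A^r,A^s]_{ij}-R_{ijsr})$ when $T=0$) into $\|[A^r,A^s]\|^2+|R_{srij}|^2$, leaving only manifestly nonnegative remainders $|R_{ijkr}|^2+|R_{srij}|^2$ and the coefficient pattern $-\|[A^r,A^s]\|^2-4A\hat r A-A\check R A+2A\check r A$ that produces the constants $2\hat\mu$, $\tfrac16|A|^2$, $\tfrac13|A|^2$ in the statement. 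With the full Laplacian, as in your sketch, the $\hat r$-term appears with coefficient $1$ rather than $4$ and an indefinite term of type $[A^r,A^s]\cdot\nabla^{\mathscr V}A$ survives, so the inequality you would integrate does not have the stated coefficients and cannot be closed by the eigenvalue bounds you list.

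This defect propagates into your equality analysis, where your claim that equality forces $\nabla A\equiv0$ is actually false: the Hopf fibration $S^7\to S^4$ attains equality in (iv), yet there $A^r_{ijs}=\tfrac12[A^r,A^s]_{ij}\neq0$, so $\nabla^{\mathscr V}A\neq0$. The correct equality conclusion, read off from the paper's weighted identity, is $R_{ijkr}\equiv0$ and $R_{srij}\equiv0$ (hence $A^r_{ijk}=0$, flat normal bundle of the fibres, and $A^r_{ijs}=\tfrac12[A^r,A^s]_{ij}$), from which $D|A|^2=0$ and the pointwise equality in Theorem \ref{thm1} follow; constancy of $|A|^2$ comes this way, not from parallelism of $A$. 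Two smaller points: in case (i) you also need $\check\kappa=\check\lambda$ (automatic for $n=2$) to kill the base-curvature term, not only the vanishing of commutators; and in (d1) the ``uniqueness input'' you appeal to is precisely Ranjan's theorem that a Riemannian submersion $S^7\to S^4$ with totally geodesic fibres is the Hopf fibration, which should be cited rather than asserted.
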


\begin{rem}
As we mentioned previously, the Yang-Mills condition is implied by the
Einstein condition of $M$ when the fibres are totally geodesic.
Therefore, examples satisfying our assumptions of the theorem are
plentiful (cf. \cite{Be}). Note that the corresponding pointwise
inequalities with the same equality conclusions also hold when $M$
is not closed, provided that $|A|^2$ is constant on $M$, which is also
a condition implied by the Einstein condition of $M$.
\end{rem}
\begin{rem}
Besides the classification problem, searching examples of Riemannian
submersions in (c) and (d2) of the theorem might make sense to itself. For instance, if $M^6$ is simply
connected with connected fibres in case (c), then $B^3$ and $F^3$ are round spheres and $M^6=B^3\times F^3$
is a topological product but not Riemannian, nor warped product (cf.
Remark 9.57 in \cite{Be}, and \cite{He,Na}).
\end{rem}

To conclude the introduction, we remark that as the Chern
problem, the Peng-Terng gap theorem and the classification problem of its equality case, all based on the Simons inequality in submanifold
geometry (cf. \cite{chern, CCK,DX, GT3,Law,Lu,Peng-T1,Peng-T2, Yau}, \emph{etc.}), one can now ask the ``dual"
versions for Riemannian submersions with square norm of the
integrability tensor $A$ instead of square norm of the second
fundamental form $h$.

\section{DDVV-type skew-symmetric matrix inequality}
\subsection{Notations and preparing lemmas} Throughout this section, we
denote by $M(m,n)$ the space of $m\times n$ real matrices, $M(n)$
the space of $n\times n$ real matrices and $\mathfrak{o}(n)$ the
$N:=\frac{n(n-1)}{2}$ dimensional subspace of skew-symmetric
matrices in $M(n)$. \vskip 0.05cm For every $(i,j)$ with $1\leq
i<j\leq n$, let $\tilde{E}_{ij}:=\frac{1}{\sqrt{2}}(E_{ij}-E_{ji})$,
where $E_{ij}\in M(n)$ is the matrix with $(i,j)$ entry $1$ and all
others $0$. Clearly $\{\tilde{E}_{ij}\}_{i<j}$ is an orthonormal
basis of $\mathfrak{o}(n)$. Let us take an order of the indices set
$S:=\{(i,j)| 1\leq i< j\leq n\}$ by
\begin{equation}\label{order}
(i,j)<(k,l) \hskip 0.2cm if\hskip 0.1cm and\hskip 0.1cm only\hskip
0.1cm if \hskip 0.2cm i<k\hskip 0.1cm or\hskip 0.1cm i=k<j<l.
\end{equation}
In this way we can identify $S$ with $\{1,\cdots,N\}$ and write
elements of $S$ in Greek, i.e. for $\alpha=(i,j)\in S$, we can say
$1\leq\alpha\leq N$.

 For $\alpha=(i,j)<(k,l)=\beta$ in $S$, direct
calculations show that
\begin{equation}\label{E tilde norm}
\|[\tilde{E}_{\alpha}, \tilde{E}_{\beta}]\|^2=
\begin{cases}
\frac{1}{2}, \quad i<j=k<l ~ or ~
i=k<j<l ~ or ~ i<k<j=l;\\
0, \quad otherwise,
\end{cases}
\end{equation}
and for any $\alpha, \beta\in S,$
\begin{equation}\label{E tilde inner}
\sum_{\gamma\in S}~ \langle~ [\tilde{E}_{\alpha},
\tilde{E}_{\gamma}],~ [\tilde{E}_{\beta}, \tilde{E}_{\gamma}]~
\rangle= (n-2)\delta_{\alpha\beta},
\end{equation}
where $\delta_{\alpha\beta}=\delta_{ik}\delta_{jl}$, and $\langle
\cdot , \cdot\rangle$ is the standard inner product of $M(n)$.

Let $\{\tilde{Q}_{\alpha}\}_{\alpha\in S}$ be any orthonormal basis
of $\mathfrak{o}(n)$. There exists a unique orthogonal matrix $Q\in
O(N)$ such that
$(\tilde{Q}_1,\cdots,\tilde{Q}_N)=(\tilde{E}_1,\cdots,\tilde{E}_N)Q$,
i.e.
$\tilde{Q}_{\alpha}=\sum_{\beta}q_{\beta\alpha}\tilde{E}_{\beta}$
for $Q=(q_{\alpha\beta})_{N\times N}$. If we set
$\tilde{Q}_{\alpha}=(\tilde{q}^{\alpha}_{ij})_{n\times n}$, then
$\tilde{q}^{\alpha}_{ij}=-\tilde{q}^{\alpha}_{ji}=
\frac{1}{\sqrt{2}}q_{\beta\alpha}$ for $\beta=(i,j)\in S$. Henceforth,
this correspondence between an orthonormal basis $\{\tilde{Q}_{\alpha}\}_{\alpha\in S}$ of $\mathfrak{o}(n)$ and an orthogonal matrix $Q\in O(N)$ is regarded known.

Let $\lambda_1,\cdots,\lambda_{[\frac{n}{2}]}$ be $[\frac{n}{2}]$
real numbers satisfying $\sum_i\lambda_i^2=\frac{1}{2}$ and
$\lambda_1\geq \cdots\geq \lambda_{[\frac{n}{2}]}\geq 0$. Denote by
$I:=\{(i,j)\in S|(\lambda_i+\lambda_j)^2>\frac{2}{3}\}$ and $n_0$
the number of elements of $I$. It is easily seen that $n_0=0$ when
$n=3$. Moreover, we have
\begin{lem}\label{lem1}
If $I$ is not empty, i.e. $n_0\geq 1$, then
\[I=\{1\}\times \{2,\cdots,n_0+1\}, \quad n_0+1\leq [\frac{n}{2}].\]
\end{lem}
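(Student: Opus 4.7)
The plan is to prove the lemma in two stages. First, show that any $(i,j) \in I$ must have $i = 1$; this is where the normalization $\sum_k \lambda_k^2 = \frac{1}{2}$ is essential. Second, use the monotonicity $\lambda_1 \geq \lambda_2 \geq \cdots \geq 0$ to upgrade this to an initial-segment structure for the second coordinates, and to bound $n_0 + 1 \leq [\frac{n}{2}]$.

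For stage one, I would argue by contradiction. Suppose $(i,j) \in I$ with $2 \leq i < j$. Monotonicity gives
\[ \lambda_1 + \lambda_i \geq \lambda_i + \lambda_j > \sqrt{2/3}, \qquad \lambda_1 + \lambda_j \geq \lambda_i + \lambda_j > \sqrt{2/3}, \]
so $(1,i)$ and $(1,j)$ also lie in $I$. Summing the three strict inequalities $(\lambda_a + \lambda_b)^2 > 2/3$ over $(1,i),(1,j),(i,j)$ produces a total strictly greater than $2$. On the other hand, via the elementary identity
\[ (\lambda_1+\lambda_i)^2 + (\lambda_1+\lambda_j)^2 + (\lambda_i+\lambda_j)^2 = (\lambda_1+\lambda_i+\lambda_j)^2 + (\lambda_1^2+\lambda_i^2+\lambda_j^2), \]
together with Cauchy-Schwarz $(\lambda_1+\lambda_i+\lambda_j)^2 \leq 3(\lambda_1^2+\lambda_i^2+\lambda_j^2)$ and the bound $\lambda_1^2+\lambda_i^2+\lambda_j^2 \leq \frac{1}{2}$ from the normalization, the same total is at most $4 \cdot \frac{1}{2} = 2$, a contradiction.

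For stage two, since $j \mapsto \lambda_j$ is nonincreasing (extend by $0$ beyond $[\frac{n}{2}]$ if convenient), the condition $(1,j) \in I$ together with $2 \leq j' < j$ yields $\lambda_1 + \lambda_{j'} \geq \lambda_1 + \lambda_j > \sqrt{2/3}$, hence $(1, j') \in I$. Therefore the second coordinates in $I$ form an initial segment $\{2, \ldots, n_0+1\}$. Finally, any $(1, j) \in I$ forces $\lambda_j > 0$, because $\lambda_1 \leq \sqrt{1/2} < \sqrt{2/3}$ alone is insufficient; consequently $n_0 + 1 \leq [\frac{n}{2}]$.

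The main obstacle is stage one: one has to spot that the right move is to symmetrize on $\{1,i,j\}$ and to exploit the algebraic identity above, so that Cauchy-Schwarz and the $L^2$-normalization combine to give precisely the threshold $2$, matching the lower bound coming from the three defining inequalities of $I$ and thereby producing the contradiction. Stage two is then routine monotonicity bookkeeping.
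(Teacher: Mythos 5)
Your proof is correct and is essentially the paper's own argument: the paper also derives the contradiction by summing the three inequalities $(\lambda_a+\lambda_b)^2>\frac{2}{3}$ over the triple and comparing with $4(\lambda_1^2+\lambda_i^2+\lambda_j^2)\leq 2$, merely reducing first (by monotonicity) to the single pair $(2,3)$ instead of a general $(i,j)$ with $i\geq2$. Your identity-plus-Cauchy--Schwarz step is just a repackaging of the elementary bound $2(\lambda_1\lambda_i+\lambda_1\lambda_j+\lambda_i\lambda_j)\leq 2(\lambda_1^2+\lambda_i^2+\lambda_j^2)$ used there, and the monotonicity bookkeeping in your stage two matches the paper's implicit conclusion.
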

\begin{proof}
Obviously, by the assumptions of $\lambda_i$'s, $(1,2)\in I$ if $I$
is not empty. It suffices to prove that $(2,3)$ is not in $I$.
Otherwise, we have
\[(\lambda_1+\lambda_2)^2\geq(\lambda_1+\lambda_3)^2\geq (\lambda_2+\lambda_3)^2>\frac{2}{3},\]
and thus
\[4(\lambda_1^2+\lambda_2^2+\lambda_3^2)\geq(\lambda_1+\lambda_2)^2+(\lambda_1+\lambda_3)^2+(\lambda_2+\lambda_3)^2>2,\]
which contradicts with
$\lambda_1^2+\lambda_2^2+\lambda_3^2\leq\sum_i\lambda_i^2=\frac{1}{2}.$
\end{proof}

\begin{lem}\label{lem2}
 We have
\[\sum_{(i,j)\in I}\Big((\lambda_i+\lambda_j)^2-\frac{2}{3}\Big)\leq \frac{1}{3},\]
where the equality holds if and only if $n_0=1$,
$\lambda_1=\lambda_2=\frac{1}{2}$ and all other $\lambda_j$'s 0.
\end{lem}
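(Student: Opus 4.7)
The plan is to invoke Lemma~\ref{lem1}, which reduces $I$ (when nonempty) to pairs of the form $(1,j)$ with $j=2,\ldots,n_0+1$, so that the quantity to be bounded becomes
$$\Sigma:=\sum_{j=2}^{n_0+1}\Bigl((\lambda_1+\lambda_j)^2-\tfrac{2}{3}\Bigr),$$
and then to estimate the quadratic sum $A:=\sum_{j=2}^{n_0+1}(\lambda_1+\lambda_j)^2$ by a two-step Cauchy--Schwarz / AM--GM argument that simultaneously yields the sharp bound and pins down the equality configuration.

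Setting $\sigma:=\sum_{j=2}^{n_0+1}\lambda_j$ and $\tau:=\sum_{j=2}^{n_0+1}\lambda_j^2$, I would first write $A=n_0\lambda_1^2+2\lambda_1\sigma+\tau$. Cauchy--Schwarz gives $\sigma\le\sqrt{n_0\tau}$, and the elementary inequality $2\sqrt{xy}\le x+y$ applied with $x=\lambda_1^2$ and $y=n_0\tau$ yields $2\lambda_1\sigma\le\lambda_1^2+n_0\tau$ (using $\lambda_1\ge 0$). Combining,
$$A\le(n_0+1)(\lambda_1^2+\tau)\le(n_0+1)\sum_{i}\lambda_i^2=\tfrac{n_0+1}{2},$$
so
$$\Sigma\le\frac{n_0+1}{2}-\frac{2n_0}{3}=\frac{3-n_0}{6}.$$

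From here the rest is bookkeeping. Since every summand of $\Sigma$ is strictly positive by the very definition of $I$, the case $n_0\ge 3$ is ruled out (the right-hand side would be $\le 0$), hence $n_0\in\{0,1,2\}$; the case $n_0=2$ gives $\Sigma\le 1/6<1/3$, so only $n_0=1$ permits the equality $\Sigma=1/3$. Tracing back the three inequalities in the chain: equality in Cauchy--Schwarz is automatic when $n_0=1$, equality in $2\sqrt{xy}\le x+y$ forces $\lambda_1^2=\tau=\lambda_2^2$, and equality in $\lambda_1^2+\tau\le 1/2$ forces $\lambda_j=0$ for $j\ge 3$; together these give $\lambda_1=\lambda_2=1/2$, which does indeed place $(1,2)$ in $I$ since $(\lambda_1+\lambda_2)^2=1>2/3$.

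I do not foresee a serious obstacle; the only delicate point is selecting the AM--GM split so that the coefficients of $\lambda_1^2$ and $\tau$ match on the right-hand side. This is precisely what makes the final bound depend only on the scalar constraint $\sum_i\lambda_i^2=1/2$ and not on how the mass is distributed between $\lambda_1^2$ and $\tau$, and it is also what keeps the equality conditions mutually compatible.
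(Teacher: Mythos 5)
Your proposal is correct and follows essentially the same route as the paper: after the reduction via Lemma~\ref{lem1}, you bound $n_0\lambda_1^2+2\lambda_1\sigma+\tau$ by $(n_0+1)(\lambda_1^2+\tau)\le\frac{n_0+1}{2}$ using the same Cauchy--Schwarz and AM--GM estimates (merely applied in the opposite order), arriving at the same bound $\frac{n_0+1}{2}-\frac{2}{3}n_0$ and the same equality configuration. Your explicit tracing of the equality case is a slightly more detailed version of what the paper leaves as ``easily seen from the proof.''
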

\begin{proof}
By Lemma \ref{lem1},
\begin{eqnarray}
\sum_{(i,j)\in I}[(\lambda_i+\lambda_j)^2-\frac{2}{3}]
&=&\sum_{j=2}^{n_0+1}(\lambda^2_1+\lambda^2_j+2\lambda_1\lambda_j)-\frac{2}{3}n_0\nonumber\\
&=&n_0\lambda^2_1+\sum_{j=2}^{n_0+1}\lambda^2_j+2\lambda_1\sum_{j=2}^{n_0+1}\lambda_j-\frac{2}{3}n_0\nonumber\\
&\leq&(n_0+1)\lambda^2_1+\sum_{j=2}^{n_0+1}\lambda^2_j+\Big(\sum_{j=2}^{n_0+1}\lambda_j\Big)^2-\frac{2}{3}n_0\nonumber\\
&\leq&(n_0+1)\Big(\lambda^2_1+\sum_{j=2}^{n_0+1}\lambda^2_j\Big)-\frac{2}{3}n_0\nonumber\\
&\leq&(n_0+1)\sum_{i}\lambda^2_i-\frac{2}{3}n_0=
\frac{n_0+1}{2}-\frac{2}{3}n_0\leq \frac{1}{3},\nonumber
\end{eqnarray}
where the equality condition is easily seen from the proof.
\end{proof}

\begin{lem}\label{lem3}
For any $Q\in O(N)$, $\alpha\in S$ and any subset $J_{\alpha}\subset S$, we
have
\[\sum_{\beta\in J_{\alpha}}\Big( \|[\tilde{Q}_{\alpha},
\tilde{Q}_{\beta}]\|^2-\frac{2}{3}\Big)\leq \frac{2}{3}.\]
\end{lem}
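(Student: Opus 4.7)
The plan is to treat $T_\alpha := [\tilde{Q}_\alpha,\cdot\,]$ as a skew-adjoint linear operator on $(\mathfrak{o}(n),\langle\cdot,\cdot\rangle)$ and to bound $\sum_{\beta\in J_\alpha}\|T_\alpha\tilde{Q}_\beta\|^2$ in terms of the largest squared singular values of $T_\alpha$ via a Poincar\'e-type trace inequality, thereby reducing the claim to Lemma \ref{lem2}.

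First I make two easy reductions. Since adjoining any $\beta$ with $\|[\tilde{Q}_\alpha,\tilde{Q}_\beta]\|^2\leq\tfrac{2}{3}$ can only weaken the desired bound, it suffices to handle $J_\alpha=\{\beta\in S:\|[\tilde{Q}_\alpha,\tilde{Q}_\beta]\|^2>\tfrac{2}{3}\}$. Next, because conjugation $\tilde{Q}\mapsto P\tilde{Q} P^t$ by $P\in O(n)$ preserves all commutator Frobenius norms and maps orthonormal bases of $\mathfrak{o}(n)$ to orthonormal bases, I may assume $\tilde{Q}_\alpha$ has the canonical form $\tilde{Q}_\alpha=\sum_{k}\sqrt{2}\,\lambda_k\,\tilde{E}_{2k-1,2k}$ with $\lambda_1\geq\cdots\geq\lambda_{[n/2]}\geq 0$ and $\sum_k\lambda_k^2=\tfrac{1}{2}$, exactly the setup of Lemmas \ref{lem1}--\ref{lem2}.

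The central step is to compute the singular values of $T_\alpha$. A direct check shows $(\mathrm{ad}_Y)^*=-\mathrm{ad}_Y$ for $Y\in\mathfrak{o}(n)$, so $T_\alpha$ is skew-adjoint. Complexifying via $\mathfrak{o}(n)\otimes\mathbb{C}\cong\bigwedge^2\mathbb{C}^n$, the wedge $e_a\wedge e_b$ of two eigenvectors of $\tilde{Q}_\alpha$ with eigenvalues $\tau_a,\tau_b$ is an eigenvector of $T_\alpha$ with eigenvalue $\tau_a+\tau_b$. Since $\tilde{Q}_\alpha$ has eigenvalues $\{\pm i\lambda_k\}$ (together with a simple $0$ if $n$ is odd), the squared singular values of $T_\alpha$ on real $\mathfrak{o}(n)$ are
\[
\{(\lambda_k+\lambda_l)^2,\,(\lambda_k-\lambda_l)^2:1\leq k<l\leq [n/2]\},\quad\text{each with multiplicity }2,
\]
plus $\{\lambda_k^2\}$ with multiplicity $2$ each when $n$ is odd (plus zeros). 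The key observation is that $(\lambda_k-\lambda_l)^2\leq\lambda_k^2+\lambda_l^2\leq\tfrac{1}{2}$ and $\lambda_k^2\leq\tfrac{1}{2}$, so the only squared singular values exceeding $\tfrac{2}{3}$ are the $(\lambda_k+\lambda_l)^2$ with $(k,l)\in I$, still with multiplicity $2$.

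Finally, the Ky Fan / Poincar\'e separation theorem applied to the orthonormal family $\{\tilde{Q}_\beta\}_{\beta\in J_\alpha}$ gives $\sum_{\beta\in J_\alpha}\|T_\alpha\tilde{Q}_\beta\|^2\leq\sum_{i=1}^{|J_\alpha|}\sigma_i(T_\alpha)^2$. A short case split on whether $|J_\alpha|$ exceeds the number of singular values with $\sigma_i^2>\tfrac{2}{3}$ then yields
\[
\sum_{\beta\in J_\alpha}\!\Bigl(\|[\tilde{Q}_\alpha,\tilde{Q}_\beta]\|^2-\tfrac{2}{3}\Bigr)\leq\sum_{\sigma_i^2>2/3}\!\Bigl(\sigma_i^2-\tfrac{2}{3}\Bigr)=2\!\sum_{(k,l)\in I}\!\Bigl((\lambda_k+\lambda_l)^2-\tfrac{2}{3}\Bigr)\leq 2\cdot\tfrac{1}{3}=\tfrac{2}{3},
\]
the last inequality being Lemma \ref{lem2}. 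The main subtlety I foresee is precisely the multiplicity-$2$ bookkeeping in the spectrum of $T_\alpha$: it is the skew-adjointness (which pairs eigenvalues into $\pm i\mu$) that doubles each real squared singular value, and this doubling is exactly what makes the constant $\tfrac{2}{3}$ here twice the $\tfrac{1}{3}$ in Lemma \ref{lem2}. Getting that factor straight, together with the truncation step that discards the $\sigma_i^2\leq\tfrac{2}{3}$ contributions, is where the argument needs the most care.
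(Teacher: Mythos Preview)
Your argument is correct. Both your proof and the paper's rest on the same core ingredients: put $\tilde{Q}_\alpha$ into canonical block form, identify the eigenvalues of $\mathrm{ad}_{\tilde{Q}_\alpha}$ on $\mathfrak{o}(n)$ as $\{\pm i(\lambda_k\pm\lambda_l)\}$ (plus $\{\pm i\lambda_k\}$ when $n$ is odd), observe that only the $(\lambda_k+\lambda_l)^2$ with $(k,l)\in I$ exceed $\tfrac{2}{3}$, and finish with Lemma~\ref{lem2} to pick up the factor $2\cdot\tfrac{1}{3}=\tfrac{2}{3}$. The difference is in packaging. The paper carries out an explicit unitary conjugation $\check{Q}_\gamma=U\sqrt{-1}\tilde{Q}_\gamma U^*$, writes $\|[\tilde{Q}_\alpha,\tilde{Q}_\beta]\|^2=\sum_{i,j}(u_i-u_j)^2|\check{q}^\beta_{ij}|^2$ in coordinates, and then bounds termwise using the completeness relation $\sum_{\gamma}|\check{q}^\gamma_{ij}|^2=\tfrac{1}{2}$; you instead view $T_\alpha=\mathrm{ad}_{\tilde{Q}_\alpha}$ abstractly as a skew-adjoint operator and invoke the Ky Fan trace inequality to bound $\sum_{\beta\in J_\alpha}\langle\tilde{Q}_\beta,T_\alpha^*T_\alpha\tilde{Q}_\beta\rangle$ by the sum of the top $|J_\alpha|$ eigenvalues of $T_\alpha^*T_\alpha$. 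Your route is shorter and more conceptual, and avoids the entrywise bookkeeping with $\check{q}^\gamma_{ij}$; the paper's coordinate computation, on the other hand, makes the equality analysis later in the proof of Theorem~\ref{thm1} more directly traceable. Two minor remarks: your initial reduction to $J_\alpha=\{\beta:\|[\tilde{Q}_\alpha,\tilde{Q}_\beta]\|^2>\tfrac{2}{3}\}$ is harmless but unnecessary, since the Ky Fan step plus the truncation already handle an arbitrary $J_\alpha$; and your multiplicity-$2$ count is indeed the crux, exactly matching the paper's observation that each $(k,l)\in I$ contributes via both $(2k-1,2l)$ and $(2k,2l-1)$ in $\check{S}$.
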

\begin{proof}
Given $\alpha\in S$, under some $O(n)\subset K$ action, without loss
of generality, we can assume
$$\tilde{Q}_{\alpha}=diag\left(\left(\begin{array}{cc}0& \lambda_1
\\-\lambda_1& 0
\end{array}\right),\cdots,\left(\begin{array}{cc}0& \lambda_{[\frac{n}{2}]}
\\-\lambda_{[\frac{n}{2}]}& 0
\end{array}\right),0\right),$$
where $\lambda_1\geq\cdots\geq\lambda_{[\frac{n}{2}]}\geq 0$,
$\sum_i\lambda^2_i=\frac{1}{2}$ and the last $0$ exists only if $n$
is odd.

Put \begin{equation}\label{U matrix}
U:=diag\left(\left(\begin{array}{cc}
\frac{1}{\sqrt{2}}&\frac{\sqrt{-1}}{\sqrt{2}}
\\\frac{\sqrt{-1}}{\sqrt{2}}& \frac{1}{\sqrt{2}}
\end{array}\right),\cdots,\left(\begin{array}{cc}
\frac{1}{\sqrt{2}}&\frac{\sqrt{-1}}{\sqrt{2}}
\\\frac{\sqrt{-1}}{\sqrt{2}}& \frac{1}{\sqrt{2}}
\end{array}\right),1\right),
\end{equation}
 where the last $1$ exists only if $n$ is
odd. Set $\check{Q}_{\gamma}:=U\sqrt{-1}\tilde{Q}_{\gamma}U^*:=(\check{q}^{\gamma}_{ij})$ for $\gamma\in S$, where $U^*$ denotes the conjugate transpose. Then the following identities can be easily verified for $k,l=1,\cdots,[\frac{n}{2}]$ and $k<l$:
\begin{eqnarray*}
&&\check{q}^{\gamma}_{2k-1,2k-1}=-\check{q}^{\gamma}_{2k,2k}=\tilde{q}^{\gamma}_{2k-1,2k},\quad \check{q}^{\gamma}_{n,n}=0\quad if~n~is~odd;\\
&&\check{q}^{\gamma}_{2k-1,2k}=\check{q}^{\gamma}_{2k,2k-1}=0;\\
&&\check{q}^{\gamma}_{2k-1,2l-1}=-\overline{\check{q}^{\gamma}_{2k,2l}}=\frac{1}{2}\{(\tilde{q}^{\gamma}_{2k-1,2l}-\tilde{q}^{\gamma}_{2k,2l-1})+\sqrt{-1}(\tilde{q}^{\gamma}_{2k-1,2l-1}+\tilde{q}^{\gamma}_{2k,2l})\};\\
&&\check{q}^{\gamma}_{2k-1,2l}=-\overline{\check{q}^{\gamma}_{2k,2l-1}}=\frac{1}{2}\{(\tilde{q}^{\gamma}_{2k-1,2l-1}-\tilde{q}^{\gamma}_{2k,2l})+\sqrt{-1}(\tilde{q}^{\gamma}_{2k,2l-1}+\tilde{q}^{\gamma}_{2k-1,2l})\};\\
&&\check{q}^{\gamma}_{2k-1,n}=\sqrt{-1}\overline{\check{q}^{\gamma}_{2k,n}}=\overline{\check{q}^{\gamma}_{n,2k-1}}=-\sqrt{-1}\check{q}^{\gamma}_{n,2k}=\frac{1}{\sqrt{2}}(-\tilde{q}^{\gamma}_{2k,n}+\sqrt{-1}\tilde{q}^{\gamma}_{2k-1,n})~~ if~n~is~odd.
\end{eqnarray*}
In particular,
$$\check{Q}_{\alpha}=
diag(\lambda_1,-\lambda_1,\cdots,\lambda_{[\frac{n}{2}]},-\lambda_{[\frac{n}{2}]},0)=:diag(u_1,u_2,\cdots,u_n).$$
For any $(i,j)\in
\acute{S}:=\{(i,j)\in S|(i,j)\neq (2k-1,2k), 1\leq k\leq
[\frac{n}{2}]\}$, it follows from the identities above that
$$\sum_{\gamma\in S}|\check{q}^{\gamma}_{ij}|^2=\frac{1}{2}.$$
As for the proof, we take $(2k-1,2l-1)\in\acute{S}$ for example:
\begin{eqnarray*}
\sum_{\gamma\in S}|\check{q}^{\gamma}_{2k-1,2l-1}|^2
&=&\sum_{\gamma\in S}\frac{1}{4}\Big((\tilde{q}^{\gamma}_{2k-1,2l})^2+(\tilde{q}^{\gamma}_{2k,2l-1})^2+(\tilde{q}^{\gamma}_{2k-1,2l-1})^2+(\tilde{q}^{\gamma}_{2k,2l})^2\\
&&-2\tilde{q}^{\gamma}_{2k-1,2l}\tilde{q}^{\gamma}_{2k,2l-1}+2\tilde{q}^{\gamma}_{2k-1,2l-1}\tilde{q}^{\gamma}_{2k,2l}\Big)\\
&=&\sum_{\gamma\in S}\frac{1}{8}\Big((q_{(2k-1,2l)\gamma})^2+(q_{(2k,2l-1)\gamma})^2+(q_{(2k-1,2l-1)\gamma})^2+(q_{(2k,2l)\gamma})^2\\
&&-2q_{(2k-1,2l)\gamma}q_{(2k,2l-1)\gamma}+2q_{(2k-1,2l-1)\gamma}q_{(2k,2l)\gamma}\Big)\\
&=&\frac{1}{8}(1+1+1+1+0+0)=\frac{1}{2}.
\end{eqnarray*}
 Denote by $\check{S}:=\{(i,j)\in
\acute{S}|(u_i-u_j)^2>\frac{2}{3}\}$. Since $\sum_i\lambda^2_i=\frac{1}{2}$, we find that $u_iu_j<0$ for $(i,j)\in\check{S}$ and hence $(u_i,u_j)=(\lambda_k,-\lambda_l)$ or $(-\lambda_k,\lambda_l)$ for some $(k,l)\in I$. Then by the preceding identities and Lemma \ref{lem2}, we complete the proof of the lemma as follows:
\begin{eqnarray}
\sum_{\beta\in J_{\alpha}}\Big( \|[\tilde{Q}_{\alpha},
\tilde{Q}_{\beta}]\|^2-\frac{2}{3}\Big)&=&\sum_{\beta\in
J_{\alpha}}\Big(
\|[\check{Q}_{\alpha}, \check{Q}_{\beta}]\|^2-\frac{2}{3}\Big)\nonumber\\
&=&\sum_{\beta\in
J_{\alpha}}\sum^n_{i,j=1}\Big((u_i-u_j)^2-\frac{2}{3}\Big)|\check{q}^{\beta}_{ij}|^2\nonumber\\
&\leq&\sum_{\beta\in
J_{\alpha}}2\sum_{i<j}\Big((u_i-u_j)^2-\frac{2}{3}\Big)|\check{q}^{\beta}_{ij}|^2\nonumber\\
&=&2\sum_{\beta\in
J_{\alpha}}\sum_{(i,j)\in \acute{S}}\Big((u_i-u_j)^2-\frac{2}{3}\Big)|\check{q}^{\beta}_{ij}|^2\nonumber\\
&\leq&2\sum_{(i,j)\in
\check{S}}\Big((u_i-u_j)^2-\frac{2}{3}\Big)\sum_{\beta\in
J_{\alpha}}|\check{q}^{\beta}_{ij}|^2\nonumber\\
&\leq&2\sum_{(i,j)\in
\check{S}}\Big((u_i-u_j)^2-\frac{2}{3}\Big)\sum_{\beta\in S}|\check{q}^{\beta}_{ij}|^2\nonumber\\
&\leq& 4\sum_{(k,l)\in
I}\Big((\lambda_k+\lambda_l)^2-\frac{2}{3}\Big)\frac{1}{2}\nonumber\\
&\leq&\frac{2}{3}.\nonumber
\end{eqnarray}
\end{proof}

\begin{lem}\label{lem4}
For any $Q\in O(N)$ and $\alpha\in S$, we have
$$\sum_{\beta\in
S}\|[\tilde{Q}_{\alpha}, \tilde{Q}_{\beta}]\|^2=n-2.$$
\end{lem}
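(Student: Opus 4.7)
The plan is to reduce the lemma to identity (\ref{E tilde inner}), which records exactly the same sum but for the standard basis $\{\tilde{E}_{\gamma}\}$. The switch from $\{\tilde{Q}_{\gamma}\}$ to $\{\tilde{E}_{\gamma}\}$ should cost nothing, since for any fixed $X\in\mathfrak{o}(n)$ the quantity $\sum_{\gamma\in S}\|[X,\tilde{Q}_{\gamma}]\|^2$ is the Hilbert--Schmidt squared norm of the adjoint operator $\mathrm{ad}_{X}:\mathfrak{o}(n)\to\mathfrak{o}(n)$, and is therefore independent of the orthonormal basis used. Concretely, writing $\tilde{Q}_{\gamma}=\sum_{\mu}q_{\mu\gamma}\tilde{E}_{\mu}$ and using $\sum_{\gamma}q_{\mu\gamma}q_{\mu'\gamma}=\delta_{\mu\mu'}$, one verifies
\[
\sum_{\gamma\in S}\|[X,\tilde{Q}_{\gamma}]\|^2=\sum_{\mu\in S}\|[X,\tilde{E}_{\mu}]\|^2
\]
by expanding the commutator bilinearly. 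Applying this with $X=\tilde{Q}_{\alpha}$ reduces the problem to computing $\sum_{\mu\in S}\|[\tilde{Q}_{\alpha},\tilde{E}_{\mu}]\|^2$.

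For the remaining computation, I expand $\tilde{Q}_{\alpha}=\sum_{\nu}q_{\nu\alpha}\tilde{E}_{\nu}$ on the left slot and pull the sums outside to obtain
\[
\sum_{\mu\in S}\|[\tilde{Q}_{\alpha},\tilde{E}_{\mu}]\|^2=\sum_{\nu,\nu'\in S}q_{\nu\alpha}q_{\nu'\alpha}\sum_{\mu\in S}\bigl\langle[\tilde{E}_{\nu},\tilde{E}_{\mu}],[\tilde{E}_{\nu'},\tilde{E}_{\mu}]\bigr\rangle.
\]
By identity (\ref{E tilde inner}), the inner sum equals $(n-2)\delta_{\nu\nu'}$, so the expression collapses to $(n-2)\sum_{\nu}q_{\nu\alpha}^{2}=n-2$, using once more that $Q\in O(N)$ to conclude $\sum_{\nu}q_{\nu\alpha}^{2}=1$.

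The argument is essentially bookkeeping: two applications of the orthogonality of $Q$ sandwich a single application of (\ref{E tilde inner}). I do not anticipate any genuine obstacle; the only mild care required is to keep track of which index runs over which factor when expanding the commutators, but the structure is exactly parallel to the computation of $\sum_{\gamma}\|[\tilde{E}_{\alpha},\tilde{E}_{\gamma}]\|^2=n-2$ that is implicit in (\ref{E tilde inner}).
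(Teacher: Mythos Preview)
Your proof is correct and is essentially the same as the paper's: both arguments expand $\tilde{Q}_{\alpha}$ and $\tilde{Q}_{\beta}$ in the standard basis $\{\tilde{E}_{\gamma}\}$, apply the orthogonality relations of $Q$ twice (once to eliminate the $\beta$-sum, once to eliminate the remaining coefficient sum), and invoke identity~(\ref{E tilde inner}) in between. The only cosmetic difference is that you split the computation into two stages and add the nice interpretation of the first stage as basis-independence of the Hilbert--Schmidt norm of $\mathrm{ad}_X$, whereas the paper carries out the full expansion in a single chain of equalities.
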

\begin{proof}
It follows from $(\ref{E tilde inner})$ that
\begin{eqnarray}
\sum_{\beta\in S}\|[\tilde{Q}_{\alpha},
\tilde{Q}_{\beta}]\|^2&=&\sum_{\beta\gamma\tau\xi\eta}q_{\gamma\alpha}q_{\xi\alpha}q_{\tau\beta}q_{\eta\beta}\langle~[\tilde{E}_{\gamma},
\tilde{E}_{\tau}],~
[\tilde{E}_{\xi}, \tilde{E}_{\eta}]~ \rangle\nonumber\\
&=&\sum_{\gamma\tau\xi\eta}q_{\gamma\alpha}q_{\xi\alpha}\delta_{\tau\eta}\langle~[\tilde{E}_{\gamma},
\tilde{E}_{\tau}],~
[\tilde{E}_{\xi}, \tilde{E}_{\eta}]~ \rangle\nonumber\\
&=&\sum_{\gamma\xi}q_{\gamma\alpha}q_{\xi\alpha}\sum_{\tau}\langle~
[\tilde{E}_{\gamma}, \tilde{E}_{\tau}],~
[\tilde{E}_{\xi}, \tilde{E}_{\tau}]~ \rangle\nonumber\\
&=&\sum_{\gamma\xi}q_{\gamma\alpha}q_{\xi\alpha}(n-2)\delta_{\gamma\xi}
=(n-2)\sum_{\gamma}q^2_{\gamma\alpha}= n-2.\nonumber
\end{eqnarray}
\end{proof}

\begin{lem}\label{lem5}
Let $A,B$ be $(n\times n)$ real skew-symmetric matrices.\\
$(i)$ If $n=3$, then we have \[\|[A,B]\|^2\leq\frac{1}{2}\|A\|^2\|B\|^2,\]
where the equality holds if and only if there is a $P\in O(3)$ such
that \[PAP^t=C_1,\quad PBP^t=aC_2+bC_3,\] where $C_1,C_2,C_3$ are
the matrices in Theorem \ref{thm1} and $a,b$ are real numbers.\\
$(ii)$ If $n\geq4$, then we have
\[\|[A,B]\|^2\leq\|A\|^2\|B\|^2,\]
where the equality holds if and only if there is a $P\in O(n)$ such
that \[PAP^t=diag(D_1,0),\quad PBP^t=a\cdot diag(D_2,0)+b\cdot
diag(D_3,0),\] where $D_1,D_2,D_3$ are the matrices in Theorem
\ref{thm1} and $a,b$ are real numbers.
\end{lem}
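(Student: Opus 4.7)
The plan is to recycle the complexify-and-diagonalise trick from the proof of Lemma~\ref{lem3}. By homogeneity we may assume $\|A\|^2=1$; an $O(n)$-conjugation then brings $A$ to the canonical form
\[
A=\mathrm{diag}\left(\left(\begin{array}{cc}0&\lambda_1\\ -\lambda_1&0\end{array}\right),\ldots,\left(\begin{array}{cc}0&\lambda_{[\frac{n}{2}]}\\ -\lambda_{[\frac{n}{2}]}&0\end{array}\right),0\right)
\]
with $\lambda_1\geq\cdots\geq\lambda_{[\frac{n}{2}]}\geq 0$ and $\sum_k\lambda_k^2=1/2$ (the final $0$ appears only when $n$ is odd). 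With the unitary $U$ of (\ref{U matrix}), set $\check{A}:=U\sqrt{-1}AU^*=\mathrm{diag}(u_1,\ldots,u_n)$, which is real diagonal with $u_{2k-1}=\lambda_k$, $u_{2k}=-\lambda_k$, and $u_n=0$ in the odd case, and set $\check{B}:=U\sqrt{-1}BU^*$, which is Hermitian.

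Since unitary conjugation preserves the Frobenius norm and $[\check{A},\check{B}]=-U[A,B]U^*$,
\[
\|[A,B]\|^2=\|[\check{A},\check{B}]\|^2=2\sum_{i<j}(u_i-u_j)^2|\check{B}_{ij}|^2.
\]
The entries of $\check{B}$ obey the same identities listed in the proof of Lemma~\ref{lem3}; in particular $\check{B}_{2k-1,2k}=0$ for every $k$, because the $2\times 2$ block of $U$ complex-diagonalises any $2\times 2$ real skew block. Hence only cross-block pairs and, when $n$ is odd, the pairs $(i,n)$ can contribute.

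For a cross-block pair we have $\{u_i,u_j\}\subset\{\pm\lambda_k,\pm\lambda_l\}$ with $k\neq l$, so $(u_i-u_j)^2\leq(\lambda_k+\lambda_l)^2\leq 2(\lambda_k^2+\lambda_l^2)\leq 2\sum_m\lambda_m^2=\|A\|^2$; for an odd-$n$ pair $(u_i-u_n)^2=\lambda_k^2\leq\|A\|^2/2$. When $n=3$ only the latter case arises, the common constant $\lambda_1^2=1/2$ factors out, and the Hermiticity estimate $\sum_{i<j}|\check{B}_{ij}|^2\leq\tfrac{1}{2}\|B\|^2$ gives
\[
\|[A,B]\|^2=\|A\|^2\bigl(|\check{B}_{13}|^2+|\check{B}_{23}|^2\bigr)\leq\tfrac{1}{2}\|A\|^2\|B\|^2.
\]
When $n\geq 4$ the uniform bound $(u_i-u_j)^2\leq\|A\|^2$ on the surviving pairs, combined with the same Hermiticity estimate, directly yields $\|[A,B]\|^2\leq\|A\|^2\|B\|^2$.

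For the equality discussion I track where each ``$\leq$'' is tight. In both cases one needs $\check{B}_{ii}=0$ for all $i$, which via the block formulas amounts to $B_{2k-1,2k}=0$ for every $k$. When $n=3$ this alone produces the $aC_2+bC_3$ shape. When $n\geq 4$ one additionally needs $(u_i-u_j)^2=\|A\|^2$ at every nonzero off-diagonal entry of $\check{B}$; the requirement $(\lambda_k+\lambda_l)^2=2(\lambda_k^2+\lambda_l^2)=1$ can hold only when $\lambda_k=\lambda_l=1/2$ with all other $\lambda_m=0$, forcing $A$ into the $D_1$ shape. The admissible positions of $\check{B}$ then reduce to $(1,4)$ and $(2,3)$, which, pulled back through the explicit block formulas for $U\sqrt{-1}BU^*$, force $B_{1,4}=B_{2,3}$, $B_{1,3}=-B_{2,4}$, and the vanishing of every other entry---exactly the $aD_2+bD_3$ shape. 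The step I expect to be the main technical nuisance is precisely this last translation: one has to unwind the $2\times 2$ block identities for $U\sqrt{-1}BU^*$ on each off-diagonal block in order to read off the symmetry relations on the real entries of $B$.
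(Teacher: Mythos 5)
Your proposal is correct and follows essentially the same route as the paper: the canonical block form of $A$, conjugation by the unitary $U$ of (\ref{U matrix}) so that $\check{A}$ is real diagonal and $\check{B}$ Hermitian, the identity $\|[A,B]\|^2 = 2\sum_{i<j}(u_i-u_j)^2|\check{B}_{ij}|^2$, the bound $(u_i-u_j)^2\leq(\lambda_1+\lambda_2)^2\leq 2\sum_k\lambda_k^2=\|A\|^2$, and the Hermitian norm estimate $2\sum_{i<j}|\check{B}_{ij}|^2\leq\|B\|^2$ are exactly the paper's ingredients for part (ii), and the equality bookkeeping matches. The only cosmetic difference is in part (i): the paper handles $n=3$ by a direct $3\times 3$ real computation, showing $\|[A,B]\|^2=2\lambda^2(b_{13}^2+b_{23}^2)$ and reading off $b_{12}=0$, whereas you fold $n=3$ into the complexification machinery; both land on the same equality condition.
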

\begin{proof}
$(i)$ As $A$ is now a $(3\times 3)$ real skew-symmetric matrix,
there is a $P\in O(3)$ such that \[PAP^t=\left(\begin{array}{ccc}0&
\lambda& 0\\-\lambda& 0& 0\\0& 0& 0
\end{array}\right)=C_1.\]
Denote by $PBP^t:=(b_{ij})\in \mathfrak{o}(3)$. Then direct
computation shows that
\[[PAP^t,PBP^t]=\left(\begin{array}{ccc}0&
0& \lambda b_{23}\\0& 0& -\lambda b_{13}\\ -\lambda b_{23}&\lambda
b_{13} & 0
\end{array}\right).\]
Thus
\[\|[A,B]\|^2=\|[PAP^t,PBP^t]\|^2=2\lambda^2(b_{23}^2+b_{13}^2)\leq\frac{1}{2}\|A\|^2\|B\|^2,\]
where the equality holds if and only if $b_{12}=0$, \emph{i.e.},
$PBP^t$ lies in $Span\{C_2,C_3\}$.\\
$(ii)$ As $A$ is now a $(n\times n)$ real skew-symmetric matrix,
there is a $P\in O(n)$ such that
\[PAP^t=diag\left(\left(\begin{array}{cc}0& \lambda_1
\\-\lambda_1& 0
\end{array}\right),\cdots,\left(\begin{array}{cc}0& \lambda_{[\frac{n}{2}]}
\\-\lambda_{[\frac{n}{2}]}& 0
\end{array}\right),0\right),\]
where $\lambda_1\geq\cdots\geq\lambda_{[\frac{n}{2}]}\geq 0$ and the
last $0$ exists only if $n\geq4$ is odd.

Let $U$ be the unitary matrix defined in (\ref{U matrix}). Then we
have
\[\check{A}:=U\sqrt{-1}PAP^tU^{*}=
diag(\lambda_1,-\lambda_1,...,\lambda_{[\frac{n}{2}]},-\lambda_{[\frac{n}{2}]},0):=diag(u_1,u_2,...,u_n).\]
Put
\[
\check{B}:=U\sqrt{-1}PBP^tU^{*}:=(b_{ij}),\quad sgn(n)=\Big\{
\begin{array}{ll}
1 & for~~n~odd,\\
0 & for~~n~even.
\end{array}
\]
 Then it follows from the proof of Lemma \ref{lem3} that $b_{2k-1,2k}=0$ and
 \begin{eqnarray}
\|[A,B]\|^2&=&\|[\check{A},\check{B}]\|^2=\sum_{i,j=1}^n(u_i-u_j)^2|b_{ij}|^2\nonumber\\
&=&2\Big(\sum_{k<l}[(\lambda_k-\lambda_l)^2(|b_{2k-1,2l-1}|^2+|b_{2k,2l}|^2)+(\lambda_k+\lambda_l)^2
(|b_{2k-1,2l}|^2+|b_{2k,2l-1}|^2)]\Big)\nonumber\\
&~&+2(sgn(n))\sum_k\lambda_k^2(|b_{2k-1,n}|^2+|b_{2k,n}|^2)\nonumber\\
&\leq&2\Big(\sum_{k<l}(\lambda_1+\lambda_2)^2(|b_{2k-1,2l-1}|^2+|b_{2k,2l}|^2+|b_{2k-1,2l}|^2+|b_{2k,2l-1}|^2)\Big)
\nonumber\\
&~&+2(sgn(n))\sum_k\lambda_1^2(|b_{2k-1,n}|^2+|b_{2k,n}|^2)\nonumber\\
&\leq&2\|A\|^2\Big(\sum_{k<l}(|b_{2k-1,2l-1}|^2+|b_{2k,2l}|^2+|b_{2k-1,2l}|^2+|b_{2k,2l-1}|^2)\Big)\nonumber\\
&~&+\|A\|^2(sgn(n))\sum_k(|b_{2k-1,n}|^2+|b_{2k,n}|^2)\nonumber\\
&\leq&\|A\|^2\|B\|^2.\nonumber
\end{eqnarray}
Analyzing these inequalities, we find that the equality in this case
holds if and only if $\lambda_1=\lambda_2=\frac{1}{2}\|A\|$,
$\lambda_j=0$ for $j>2$, and all $b_{ij}$'s are zero except
$b_{14}=\bar{b}_{41}$ and $b_{23}=\bar{b}_{32}$, which is equivalent
to that $PAP^t, PBP^t$ are in the forms specified in the lemma.
\end{proof}

Now let $\varphi : M(m,n)\longrightarrow M(C_m^2,C_n^2)$ be the map
defined by $\varphi(A)_{(i,j)(k,l)}:=A(_{i\hskip 0.1cm j}^{k\hskip
0.1cm l})$, where $C_m^2=\frac{m(m-1)}{2}$, $1\leq i<j\leq m$, $1\leq k<l\leq n$ and
$A(_{i\hskip 0.1cm j}^{k\hskip 0.1cm l})=a_{ik}a_{jl}-a_{il}a_{jk}$
is the determinant of the sub-matrix of $A:=(a_{ij})$ with the rows $i, j$,
the columns $k, l$, arranged with the same ordering as in
(\ref{order}). It is easily seen that $\varphi(I_n)=I_{C_n^2}$ (preserving identity matrices),
$\varphi(A)^t=\varphi(A^t)$ and the following
\begin{lem}\label{lem6}
 The map $\varphi$ preserves the matrix product, i.e.
 $\varphi(AB)=\varphi(A)\varphi(B)$ holds for $A\in M(m,k)$, $B\in
M(k,n)$.
\end{lem}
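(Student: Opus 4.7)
The plan is to prove Lemma \ref{lem6} by direct expansion, which is essentially the Cauchy--Binet formula specialized to $2\times 2$ minors. The statement is purely algebraic, and the only real content is bookkeeping for the antisymmetry of the minor operation.

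First I would fix an index pair $(i,j)$ with $1\le i<j\le m$ and $(k,l)$ with $1\le k<l\le n$, set $C=AB=(c_{ij})$, and write down
\[
\varphi(C)_{(i,j)(k,l)}
=c_{ik}c_{jl}-c_{il}c_{jk}
=\Big(\sum_{p=1}^k a_{ip}b_{pk}\Big)\Big(\sum_{q=1}^k a_{jq}b_{ql}\Big)
-\Big(\sum_{p=1}^k a_{ip}b_{pl}\Big)\Big(\sum_{q=1}^k a_{jq}b_{qk}\Big).
\]
After distributing, the double sum becomes $\sum_{p,q}a_{ip}a_{jq}(b_{pk}b_{ql}-b_{pl}b_{qk})$. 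The diagonal terms $p=q$ vanish identically, so only the off-diagonal ones contribute.

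Next I would split the off-diagonal sum into the parts $p<q$ and $p>q$, and relabel the second part by swapping $p$ and $q$. This produces
\[
\sum_{1\le p<q\le k}\bigl(a_{ip}a_{jq}-a_{iq}a_{jp}\bigr)\bigl(b_{pk}b_{ql}-b_{pl}b_{qk}\bigr)
=\sum_{1\le p<q\le k}A\bigl(_{i\;j}^{p\;q}\bigr)\,B\bigl(_{p\;q}^{k\;l}\bigr),
\]
which by definition of $\varphi$ equals
\[
\sum_{1\le p<q\le k}\varphi(A)_{(i,j)(p,q)}\,\varphi(B)_{(p,q)(k,l)}=\bigl(\varphi(A)\varphi(B)\bigr)_{(i,j)(k,l)},
\]
where the inner summation index $(p,q)$ is ordered according to (\ref{order}). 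Since $(i,j)$ and $(k,l)$ were arbitrary, this gives $\varphi(AB)=\varphi(A)\varphi(B)$.

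There is no real obstacle here; the only thing to watch is consistent use of the ordering (\ref{order}) so that the summation index set on the right-hand side matches the index set used to label rows/columns of $\varphi(A)$ and $\varphi(B)$. Everything else is a routine expansion of a $2\times 2$ determinant and a reindexing exploiting the antisymmetry $(p,q)\leftrightarrow(q,p)$.
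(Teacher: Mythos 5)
Your proof is correct: the paper states this lemma without proof (as an ``easily seen'' fact), and your direct expansion of the $2\times2$ minor of $AB$, cancelling the $p=q$ terms and reindexing the $p>q$ part by antisymmetry, is exactly the standard Cauchy--Binet verification the paper implicitly relies on. The only cosmetic issue is that you use $k$ both as the inner dimension of $A,B$ and as part of the column index pair $(k,l)$, so the summation bound in $\sum_{p=1}^{k}$ should be given a different name to avoid the clash.
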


We will also need the following exercise of linear algebra in the
proof of the equality case of Theorem \ref{thm1}.
\begin{lem}\label{lem7}
 Let $A$, $B$ be two matrices in $M(m,n)$. Then
$AA^t=BB^t$ if and only if $A=BR$ for some $R\in O(n)$.
\end{lem}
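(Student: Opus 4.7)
The plan is to prove the two directions of Lemma \ref{lem7} separately. The ``if'' direction is immediate, since if $A=BR$ with $R\in O(n)$, then $AA^t=BRR^tB^t=BB^t$. The substance lies in constructing $R$ for the converse. My approach will be to reinterpret $AA^t=BB^t$ as equality of Gram matrices of the rows, build an isometry between the corresponding row spans, and then extend it to a global orthogonal map of $\mathbb{R}^n$.

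For the converse, I assume $AA^t=BB^t$ and write $a_1^t,\ldots,a_m^t$ and $b_1^t,\ldots,b_m^t$ for the rows of $A$ and $B$, viewed as vectors in $\mathbb{R}^n$. The hypothesis then reads $\langle a_i,a_j\rangle=\langle b_i,b_j\rangle$ for all $1\leq i,j\leq m$. With $V_A:=\mathrm{span}\{a_i\}$ and $V_B:=\mathrm{span}\{b_i\}$, I define $\phi:V_B\to V_A$ by $\phi\bigl(\sum_i c_ib_i\bigr):=\sum_i c_ia_i$. Both the well-definedness of $\phi$ and its isometric property follow at once from the Gram identity: any relation $\sum_i c_ib_i=0$ forces $\|\sum_i c_ia_i\|^2=\sum_{i,j}c_ic_j\langle b_i,b_j\rangle=0$, and the same computation yields $\langle\phi(u),\phi(v)\rangle=\langle u,v\rangle$ in general. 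In particular $\dim V_A=\dim V_B$.

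The last step is to extend $\phi$ to an element of $O(n)$. Since $V_A^\perp$ and $V_B^\perp$ in $\mathbb{R}^n$ share the same dimension $n-\dim V_A$, I pick any linear isometry between them and take the direct sum with $\phi$ to obtain an orthogonal map $\Phi:\mathbb{R}^n\to\mathbb{R}^n$ satisfying $\Phi(b_i)=a_i$ for every $i$. If $S\in O(n)$ denotes the matrix of $\Phi$, the identities $a_i=Sb_i$ give $A^t=SB^t$, hence $A=BS^t$, and $R:=S^t\in O(n)$ is the required orthogonal matrix. I do not foresee any real obstacle; the only points to treat carefully are the well-definedness of $\phi$ (forced by the Gram identity) and the dimension matching of $V_A^\perp$ and $V_B^\perp$ (automatic once $\dim V_A=\dim V_B$ has been established).
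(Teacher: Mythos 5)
Your proof is correct and complete: the ``if'' direction is the one-line computation, and for the converse the Gram-matrix identity $\langle a_i,a_j\rangle=\langle b_i,b_j\rangle$ does indeed make the map $\phi\bigl(\sum_i c_ib_i\bigr)=\sum_i c_ia_i$ well defined and isometric, after which extending by any isometry $V_B^{\perp}\to V_A^{\perp}$ and translating $a_i=Sb_i$ into $A^t=SB^t$, $R:=S^t\in O(n)$, finishes the argument. The paper states this lemma as an ``exercise of linear algebra'' and supplies no proof, so there is nothing to compare against; yours is exactly the standard argument the author had in mind.
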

\subsection{Proof of Theorem \ref{thm1}} Let $B_1,\cdots,B_m$ be any
$(n\times n)$ real skew-symmetric matrices. Their coefficients under
the standard basis $\{\tilde{E}_{\alpha}\}_{\alpha\in S}$ of
$\mathfrak{o}(n)$ are determined by a matrix $B\in M(N,m)$ as
$(B_1,\cdots,B_m)=(\tilde{E}_1,\cdots,\tilde{E}_N)B$. Taking the
same ordering as in (\ref{order}) for $1\leq r<s\leq m$ and $1\leq
\alpha<\beta\leq N$, we arrange $\Big\{[B_r, B_s]\Big\}_{r<s}$,
$\Big\{[\tilde{E}_{\alpha}, \tilde{E}_{\beta}]\Big\}_{\alpha<\beta}$
into $C_m^2$, $C_N^2$-dimensional vectors respectively. We first observe that
\[([B_1, B_2],\cdots,[B_{m-1}, B_m])=([\tilde{E}_1, \tilde{E}_2],\cdots,[\tilde{E}_{N-1}, \tilde{E}_N])\cdot \varphi(B).\]

 Let $C(\tilde{E})$ denote the matrix in $M(C_N^2)$
defined by $C(\tilde{E})_{(\alpha,\beta)(\gamma,\tau)}:=\langle~
[\tilde{E}_{\alpha}, \tilde{E}_{\beta}],~ [\tilde{E}_{\gamma},
\tilde{E}_{\tau}]~ \rangle$, for $1\leq\alpha<\beta\leq N$,
$1\leq\gamma<\tau\leq N$. Moreover we will use the same notation for
$\{B_r\}$ and $\{\tilde{Q}_{\alpha}\}$, \emph{i.e.}, $C(B)$ and $C(Q)$
respectively. Then it is obvious that
\[C(B)=\varphi(B^t)C(\tilde{E})\varphi(B), \hskip 0.3cm C(Q)=\varphi(Q^t)C(\tilde{E})\varphi(Q).\]
Since $BB^t$ is a $(N\times N)$ semi-positive definite matrix, there
exists an orthogonal matrix $Q\in SO(N)$ such that $BB^t=Q~
diag(x_1,\cdots,x_N)~ Q^t$ with $x_{\alpha}\geq 0$, $1\leq\alpha\leq
N.$ Thus
\[\sum_{r=1}^m\|B_r\|^2=\|B\|^2 =\sum_{\alpha=1}^Nx_{\alpha}\] and
hence by Lemma \ref{lem6},
\begin{eqnarray}
\sum_{r,s=1}^m\|[B_r, B_s]\|^2&=&2Tr~C(B)=2Tr\hskip 0.1cm
\varphi(B^t)C(\tilde{E})\varphi(B)=2Tr~
\varphi(BB^t)C(\tilde{E})\nonumber\\&=&2Tr~
\varphi(diag(x_1,\cdots,x_N))C(Q)=\sum_{\alpha,\beta=1}^Nx_{\alpha}x_{\beta}\|[\tilde{Q}_{\alpha},
\tilde{Q}_{\beta}]\|^2.\nonumber
\end{eqnarray}

 We are now ready to prove Theorem \ref{thm1}. \\
\begin{bf}
 Proof of Theorem \ref{thm1}.
 \end{bf}
Put $d(n):= \frac{1}{3}$ if $n=3$ and $\frac{2}{3}$ if $n\geq4$. It
follows from the arguments above that the inequalities of the
theorem are equivalent to the following
\begin{equation}\label{ineq-poly}
\sum_{\alpha,\beta=1}^Nx_{\alpha}x_{\beta}\|[\tilde{Q}_{\alpha},
\tilde{Q}_{\beta}]\|^2\leq
d(n)\Big(\sum_{\alpha=1}^Nx_{\alpha}\Big)^2, ~~~~for~any~ x\in
\mathbb{R}^N_{+}, ~Q\in SO(N),
\end{equation}
where $\mathbb{R}^N_{+}:=\{0\neq
x=(x_1,...,x_N)\in\mathbb{R}^N~|~x_{\alpha}\geq0, 1\leq\alpha\leq
N\}$.

For $n=3$, $N=\frac{n(n-1)}{2}=3$ and by Lemma \ref{lem5}, we have
$\|[\tilde{Q}_{\alpha},\tilde{Q}_{\beta}]\|^2\leq\frac{1}{2}$ and
thus
\[\sum_{\beta\in S}\|[\tilde{Q}_{\alpha},\tilde{Q}_{\beta}]\|^2\leq\frac{1}{2}\times2=1.\]
On the other hand, it follows from Lemma \ref{lem4} that
$\sum_{\beta\in
S}\|[\tilde{Q}_{\alpha},\tilde{Q}_{\beta}]\|^2=n-2=1$. Therefore, we
get
\[\|[\tilde{Q}_{\alpha},\tilde{Q}_{\beta}]\|^2=\frac{1}{2},~~~~ for~any~ \alpha\neq\beta\in S.\]
In fact, this equality just says that the cross product of two
orthogonal unit vectors in $\mathbb{R}^3$ is still a unit vector if
we identify $\mathfrak{o}(3)$ with $\mathbb{R}^3$ and correspond the
commutator operator to the cross product. So in this case, the
inequality (\ref{ineq-poly}) is equivalent to
\[
x_1x_2+x_2x_3+x_3x_1\leq\frac{1}{3}(x_1+x_2+x_3)^2,~~~~for~any~ x\in
\mathbb{R}^3_{+},
\]
which is easily verified by
\[x_1x_2+x_2x_3+x_3x_1-\frac{1}{3}(x_1+x_2+x_3)^2=-\frac{1}{6}\Big((x_1-x_2)^2+(x_2-x_3)^2+(x_3-x_1)^2\Big)\leq0.\]
Note that the equality above holds if and only if
$x_1=x_2=x_3:=\lambda^2$, \emph{i.e.}, $BB^t=\lambda^2I_3$, which,
by Lemma \ref{lem7}, is equivalent to that there is a $R\in O(m)$
such that
\[(B_1,\cdots,B_m)=(\tilde{E}_{12},\tilde{E}_{13},\tilde{E}_{23})\cdot \Big(\lambda I_3,0_{3\times(m-3)}\Big)R=(C_1,C_2,C_3,0,\cdots,0)R.\]
This completes the proof of (i) of Theorem \ref{thm1}.

Now we consider the case (ii). Put
$$f_Q(x)=F(x,Q):=\sum_{\alpha,\beta=1}^Nx_{\alpha}x_{\beta}\|[\tilde{Q}_{\alpha},
\tilde{Q}_{\beta}]\|^2-\frac{2}{3}\Big(\sum_{\alpha=1}^Nx_{\alpha}\Big)^2.$$
Then $F$ is a continuous function defined on $\mathbb{R}^N\times
SO(N)$ and thus uniformly continuous on any compact subset of
$\mathbb{R}^N\times SO(N)$. Let
$\bigtriangleup:=\{x\in\mathbb{R}^N_{+}~|~\sum_{\alpha}x_{\alpha}=1\}$
and for any sufficiently small $\varepsilon>0$,
$\bigtriangleup_{\varepsilon}:=\{x\in
\bigtriangleup~|~x_{\alpha}\geq \varepsilon, 1\leq\alpha\leq N\}$.
Also let
$$G:=\{Q\in SO(N)~|~f_Q(x)\leq 0, ~for~all~ x\in
\bigtriangleup\},$$
$$G_{\varepsilon}:=\{Q\in SO(N)~|~f_Q(x)< 0,
~for~all~ x\in \bigtriangleup_{\varepsilon}\}.$$ We claim that
$G=\lim_{\varepsilon\rightarrow 0}G_{\varepsilon}=SO(N).$ Note that
this implies (\ref{ineq-poly}) and thus proves the inequality. In
fact we can show
\begin{equation}\label{G epsilon}
G_{\varepsilon}=SO(N) ~ for ~ any~ sufficiently ~ small ~
\varepsilon>0.
\end{equation}
To prove (\ref{G epsilon}), we use the continuity method, in which
we must prove the following three properties:
\begin{itemize}
\item[(a)]\label{step1} $I_N\in G_{\varepsilon}$ (and thus
$G_{\varepsilon}\neq\emptyset$);
\item[(b)]\label{step2} $G_{\varepsilon}$ is open in $SO(N)$;
\item[(c)]\label{step3} $G_{\varepsilon}$ is closed in $SO(N)$.
\end{itemize}

 Since $F$ is uniformly continuous on
$\triangle_{\varepsilon}\times SO(N)$, (b) is obvious.\\
\textbf{Proof of (a)}. For any $x\in \bigtriangleup_{\varepsilon}$,
$f_{I_N}(x)=\sum_{\alpha,\beta=1}^Nx_{\alpha}x_{\beta}\|[\tilde{E}_{\alpha},
\tilde{E}_{\beta}]\|^2-\frac{2}{3}\Big(\sum_{\alpha=1}^Nx_{\alpha}\Big)^2$.
\vskip 0.05cm It follows from (\ref{E tilde norm}) that
\begin{eqnarray}
f_{I_N}(x)&=&\sum_{i<j<k}(x_{ij}x_{jk}+x_{ij}x_{ik}+x_{ik}x_{jk})-\frac{2}{3}\Big(\sum_{i<j}x_{ij}\Big)^2\nonumber\\
&<&\sum_{i<j<k}(x_{ij}x_{jk}+x_{ij}x_{ik}+x_{ik}x_{jk})-\frac{2}{3}\sum_{i<j<k}2(x_{ij}x_{jk}+x_{ij}x_{ik}+x_{ik}x_{jk})\nonumber\\
&<&0,\nonumber
\end{eqnarray}
which means $I_N\in G_{\varepsilon}$. \hfill $\Box$\\
\textbf{Proof of (c)}. We only need to prove the following a priori
estimate: Suppose $f_Q(x)\leq 0$ for every $
x\in\bigtriangleup_{\varepsilon}$. Then $f_Q(x)< 0$ for every $
x\in\bigtriangleup_{\varepsilon}$.

The proof of this estimate is as follows: If there is a point $y\in
\bigtriangleup_{\varepsilon}$ such that $f_Q(y)= 0$, we can assume
without loss of generality that $$y\in
\bigtriangleup^{\gamma}_{\varepsilon}:=\{x\in\bigtriangleup_{\varepsilon}~|~x_{\alpha}>\varepsilon~
for~ \alpha\leq \gamma ~and~ x_{\beta}=\varepsilon~
for~\beta>\gamma\}$$ for some $1\leq \gamma\leq N$. Then $y$ is a
maximum point of $f_Q(x)$ in the cone spanned by
$\bigtriangleup_{\varepsilon}$ and an interior maximum point in
$\bigtriangleup^{\gamma}_{\varepsilon}$. Hence there exist numbers
$b_{\gamma+1},\cdots,b_N$ and a number $a$ such that
\begin{equation}\label{partial f}
\begin{array}{ll}
\Big(\frac{\partial f_Q}{\partial x_1}(y),\cdots,\frac{\partial
f_Q}{\partial x_{\gamma}}(y)\Big)=2a(1,\cdots,1),&\\
\Big(\frac{\partial f_Q}{\partial
x_{\gamma+1}}(y),\cdots,\frac{\partial f_Q}{\partial
x_{N}}(y)\Big)=2(b_{\gamma+1},\cdots,b_N)&
\end{array}
\end{equation}
or equivalently
\begin{equation}\label{partial f 2}
\sum_{\beta=1}^Ny_{\beta}(\|[\tilde{Q}_{\alpha},
\tilde{Q}_{\beta}]\|^2)-\frac{2}{3}=\Big\{
\begin{array}{ll}
 a & \alpha\leq\gamma,\\
b_{\alpha}& \alpha>\gamma.
\end{array}
\end{equation}
Hence
\[
f_Q(y)=\Big(\sum_{\alpha=1}^{\gamma}y_{\alpha}\Big)a+\Big(\sum_{\alpha=\gamma+1}^Nb_{\alpha}\Big)\varepsilon
=0\quad and \quad
\sum_{\alpha=1}^{\gamma}y_{\alpha}+(N-\gamma)\varepsilon=1.
\]
Meanwhile, we see $\frac{\partial f_Q}{\partial \nu}(y)=2(a\gamma
+\sum_{\alpha=\gamma+1}^Nb_{\alpha})\leq 0$, where
$\nu=(1,\cdots,1)$ is the vector normal to $\bigtriangleup$ in
$\mathbb{R}^N$. For any sufficiently small $\varepsilon$ (such as
$\varepsilon<1/N$), it follows from the above three formulas that
$a\geq 0$. Without loss of generality, we assume
$y_1=max\{y_1,\cdots,y_{\gamma}\}>\varepsilon$. Let $J:=\{\beta\in
S~|~ \|[\tilde{Q}_{1}, \tilde{Q}_{\beta}]\|^2\geq \frac{2}{3}\}$,
and let $n_1$ be the number of elements of $J$.
 Now combining Lemma \ref{lem3} , Lemma \ref{lem4} and Equation (\ref{partial f 2}) will give a
contradiction as follows:
\begin{eqnarray}
\frac{2}{3}\leq
\frac{2}{3}+a&=&\sum_{\beta=2}^Ny_{\beta}\|[\tilde{Q}_1,
\tilde{Q}_{\beta}]\|^2\nonumber\\
&=&\sum_{\beta\in J}y_{\beta}(\|[\tilde{Q}_1,
\tilde{Q}_{\beta}]\|^2-\frac{2}{3})+\frac{2}{3}\sum_{\beta\in
J}y_{\beta}+\sum_{\beta\in
S/J}y_{\beta}\|[\tilde{Q}_1, \tilde{Q}_{\beta}]\|^2\nonumber\\
&\leq&y_1\sum_{\beta\in J}(\|[\tilde{Q}_1,
\tilde{Q}_{\beta}]\|^2-\frac{2}{3})+\frac{2}{3}\sum_{\beta\in
J}y_{\beta}+\sum_{\beta\in S/J}y_{\beta}\|[\tilde{Q}_1,
\tilde{Q}_{\beta}]\|^2\nonumber\\
&\leq&\frac{2}{3}y_1+\frac{2}{3}\sum_{\beta\in
J}y_{\beta}+\sum_{\beta\in S/J}y_{\beta}\|[\tilde{Q}_1,
\tilde{Q}_{\beta}]\|^2 \leq
\frac{2}{3}\sum_{\beta=1}^Ny_{\beta}=\frac{2}{3}.\label{line}
\end{eqnarray}
Thus
\begin{equation}\label{n1N}
a=0\quad and \quad \sum_{\beta\in J}\|[\tilde{Q}_1,
\tilde{Q}_{\beta}]\|^2=\frac{2}{3}(n_1+1)\leq n-2<\frac{2}{3}N.
\end{equation}
Hence $S/(J\cup\{1\})\neq\emptyset$, and the second ``$\leq$" in
line (\ref{line}) should be ``$<$" by the definition of $J$ and the
positivity of $y_{\beta}$ for $\beta\in S/(J\cup\{1\})$.\hfill
$\Box$

Now we consider the equality condition of (ii) of Theorem \ref{thm1}
in view of the proof of the a priori estimate.

If there is an orthogonal matrix $Q$ and a point $y\in
\bigtriangleup$ such that $f_Q(y)= 0$, we can assume without loss of
generality that $$y\in
\bigtriangleup^{\gamma}:=\{x\in\bigtriangleup~|~x_{\alpha}>0~
for~all~\alpha\leq \gamma ~and~ x_{\beta}=0~
for~all~\beta>\gamma\}$$ for some $2\leq \gamma\leq N$. Then $y$ is
a maximum point of $f_Q(x)$ in $\mathbb{R}^N_{+}$ and an interior
maximum point in $\bigtriangleup^{\gamma}$. Therefore, we have the
same conclusions as (\ref{partial f}, \ref{partial f 2},
\ref{line}, \ref{n1N}) when $\gamma\leq n_1+1$, and all
inequalities in the proof of Lemma \ref{lem3} can be replaced by
equalities. So $n_0=1$ by Lemma \ref{lem2},
$\check{S}=\{(1,4),(2,3)\}$,
$\check{q}^{\beta}_{ii}=\check{q}^{\beta}_{ij}=0$ for any $(i,j)\in
S/\check{S}$, $\beta\in J$, which imply that $\tilde{Q}_{\beta}$ is
a linear combination of $diag(D_2,0)$, $diag(D_3,0)$ for any
$\beta\in J$. Hence, $1\leq n_1\leq 2$. But if $n_1=1$, it follows
from (\ref{n1N}) that
$\|[\tilde{Q}_1,\tilde{Q}_{\beta}]\|^2=\frac{4}{3}>1$ for $\beta\in
J$ which contradicts with Lemma \ref{lem5}. So we have $n_1=2$ and
$2\leq\gamma\leq3$. If $\gamma=2$, then it follows from Lemma
\ref{lem5} and (\ref{line}) the following contradiction:
$$\frac{2}{3}=y_2\|[\tilde{Q}_1, \tilde{Q}_2]\|^2\leq \frac{1}{2}.$$
So we get $\gamma=3$. By (\ref{line}) again, we have
$y_1=y_2=y_3=\frac{1}{3}$ and $y_\alpha=0$ for $\alpha>3$, and
$$\|[\tilde{Q}_1,\tilde{Q}_2]\|^2=\|[\tilde{Q}_1,\tilde{Q}_3]\|^2=\|[\tilde{Q}_2,\tilde{Q}_3]\|^2=1,$$
from which we can conclude the equality case of (ii) of Theorem
\ref{thm1} by Lemmas \ref{lem5} and \ref{lem7}. The proof of Theorem
\ref{thm1} is now completed.\hfill $\Box$

\section{Simons-type inequality for Riemannian submersions}\label{simons-sec}
\subsection{Moving frame method for Riemannian submersions} In this
subsection we present a treatment of basic materials about
Riemannian submersions by moving frame method.

Let $\pi: M^{n+m}\rightarrow B^n$ be a Riemannian submersion. We
denote by $D$, $R$, $r$ (resp. $\hat{D}, \hat{R}, \hat{r}$;
$\check{D},\check{R},\check{r}$) the Levi-Civita connection, the
curvature operator and the Ricci curvature on $M$ (resp. on the
fibres; on $B$) respectively. Around each point $x\in M$, we can
choose local orthonormal vertical vector fields
$\{U_{n+1},\cdots,U_{n+m}\}$ and local orthonormal basic vector
fields $\{X_1,\cdots,X_n\}$ which are horizontal and projectable
such that $\{\check{X}_1:=\pi_*X_1,\cdots,\check{X}_n:=\pi_*X_n \}$
form a local orthonormal basis around $\pi(x)\in B$. Thus
$\{X_1,\cdots,X_n,U_{n+1},\cdots,U_{n+m}\}$ form a local orthonormal
basis of $TM$ around $x\in M$ and we denote by
$\{\omega_1,\cdots,\omega_n,\omega_{n+1},\cdots,\omega_{n+m}\}$ the
dual $1$-forms on $M$ with respect to this basis, \emph{i.e.},
$$\omega_i(X_j)=\delta_{ij},\quad \omega_i(U_r)=\omega_r(X_i)=0,\quad \omega_r(U_s)=\delta_{rs},$$
where, from now on, we use the convention for indices as follows:
\begin{equation*}
h,i,j,k,l\in\{1,\cdots,n\};\quad
r,s,t,u,v\in\{n+1,\cdots,n+m\},\quad
\alpha,\beta,\gamma,\delta\in\{1,\cdots,n+m\}.
\end{equation*}
 Also we denote by
$\{\check{\omega}_1,\cdots,\check{\omega}_n\}$ the dual $1$-forms on
$B$ with respect to the basis $\{\check{X}_1,\cdots,\check{X}_n\}$
and by $\{\hat{\omega}_{n+1},\cdots,\hat{\omega}_{n+m}\}$ the dual
$1$-forms on the fibre(s) with respect to the basis
$\{U_{n+1},\cdots,U_{n+m}\}$. Then the connection $1$-forms
$\{\omega_{\alpha\beta}\}$ of $D$ on $M$, the connection $1$-forms
$\{\hat{\omega}_{rs}\}$ of $\hat{D}$ on the fibre(s) and the
connection $1$-forms $\{\check{\omega}_{ij}\}$ of $\check{D}$ can be
defined as follows:
\begin{equation}\label{connection forms}
\begin{array}{ll}
\omega_{ij}=\langle DX_i,X_j\rangle,&
\omega_{ir}=\langle DX_i,U_r\rangle=-\langle DU_r,X_i\rangle=-\omega_{ri},\quad
\omega_{rs}=\langle DU_r,U_s\rangle;\\
\hat{\omega}_{rs}=\langle\hat{D}U_r,U_s\rangle,&
\check{\omega}_{ij}=\langle\check{D}\check{X}_i,\check{X}_j\rangle,
\end{array}
\end{equation}
where without confusion we denote by bracket simultaneously the
metrics on $M$, $B$ and the fibres. Let $\{\Omega_{\alpha\beta}\}$
(resp. $\{\hat{\Omega}_{rs}\}$; $\{\check{\Omega}_{ij}\}$) be the
curvature $2$-forms on $M$ (resp. on the fibres; on $B$ ). Then we
have the following structure equations:
\begin{equation}\label{str-eq1}
\Big\{
\begin{array}{ll}
d\omega_{\alpha}=\omega_{\alpha\beta}\wedge\omega_{\beta},\quad
\omega_{\alpha\beta}=-\omega_{\beta\alpha},\\
d\omega_{\alpha\beta}=\omega_{\alpha\gamma}\wedge\omega_{\gamma\beta}+\Omega_{\alpha\beta};
\end{array}
\end{equation}
\begin{equation}\label{str-eq2}
\Big\{\begin{array}{ll}
d\hat{\omega}_{r}=\hat{\omega}_{rs}\wedge\hat{\omega}_{s},\quad
\hat{\omega}_{rs}=-\hat{\omega}_{sr},&\\
d\hat{\omega}_{rs}=\hat{\omega}_{rt}\wedge\hat{\omega}_{ts}+\hat{\Omega}_{rs};&
\end{array}
\end{equation}
\begin{equation}\label{str-eq3}
\Big\{\begin{array}{ll}
d\check{\omega}_{i}=\check{\omega}_{ij}\wedge\check{\omega}_{j},\quad
\check{\omega}_{ij}=-\check{\omega}_{ji},&\\
d\check{\omega}_{ij}=\check{\omega}_{ik}\wedge\check{\omega}_{kj}+\check{\Omega}_{ij},&
\end{array}
\end{equation}
where, from now on, repeated indices are implicitly summed over, and
we will write the curvature forms as
$\Omega_{\alpha\beta}=-\frac{1}{2}R_{\alpha\beta\gamma\delta}\omega_{\gamma}\wedge\omega_{\delta}$
and so the Ricci curvature $r=(R_{\alpha\beta})$ (resp.
$\hat{r}=(\hat{R}_{rs})$; $\check{r}=(\check{R}_{ij})$) on $M$
(resp. on the fibre(s); on $B$) can be expressed as
$R_{\alpha\beta}=R_{\alpha\gamma\beta\gamma}$ (resp.
$\hat{R}_{rs}=\hat{R}_{rtst}$; $\check{R}_{ij}=\check{R}_{ikjk}$).

Now since $\pi_*[X_i,U_r]=[\check{X}_i,\pi_*U_r]=0$ and
$\pi_*[U_r,U_s]=[\pi_*U_r,\pi_*U_s]=0$, $[X_i,U_r]$ and $[U_r,U_s]$
are vertical, thereby it follows from (\ref{connection forms}) and
the definitions of the tensors $T$ and $A$ in $(\ref{A T})$ that
\begin{equation}\label{ATcoeff}
\begin{array}{ll}
T^i_{rs}:=\omega_{ri}(U_s)=\langle T_{U_s}U_r,X_i\rangle=-\langle T_{U_s}X_i,U_r\rangle=T^i_{sr};&\\
A^r_{ij}:=\omega_{ir}(X_j)=\langle A_{X_j}X_i,U_r\rangle=-\langle A_{X_j}U_r,X_i\rangle=\omega_{ij}(U_r)=-A^r_{ji}.&
\end{array}
\end{equation}
Hence one can see that the tensor $T$ (or its coefficients
$\{T^i_{rs}\}$) is just the second fundamental form when it is
restricted to vertical vector fields along the fibre(s). Meanwhile,
we find that
\begin{equation*}
A_{X_i}X_j=-A_{X_j}X_i=\frac{1}{2}\mathscr{V}[X_i,X_j]
\end{equation*}
and thus
\begin{equation*}
A_{X}Y=\frac{1}{2}\mathscr{V}[X,Y],\quad for~~X,Y\in\mathscr{H},
\end{equation*}
which shows that $A$ measures the integrability of the horizontal
distribution $\mathscr{H}$ and so it is usually called the
\emph{integrability tensor} of $\pi$. By (\ref{A norm}) and
(\ref{ATcoeff}), we have
\begin{equation}\label{A norm2}
|A|^2=\sum_{r,i,j}(A^r_{ij})^2.
\end{equation}
Moreover, formulas (\ref{ATcoeff}) imply the following equations:
\begin{equation}\label{conn relation}
\begin{array}{ll}
\omega_{ir}=A^r_{ij}\omega_j-T^i_{rs}\omega_s,&\\
\omega_{ij}=\pi^*\check{\omega}_{ij}+A^r_{ij}\omega_r.&
\end{array}
\end{equation}
Define the covariant derivatives of $T^i_{rs}$ and $A^r_{ij}$ by
\begin{equation}\label{cov-T-A}
\begin{array}{ll}
DT^i_{rs}:=dT^i_{rs}+T^i_{ts}\omega_{tr}+T^i_{rt}\omega_{ts}+T^j_{rs}\omega_{ji}
=:T^i_{rsj}\omega_j+T^i_{rst}\omega_t,&\\
DA^r_{ij}:=dA^r_{ij}+A^r_{kj}\omega_{ki}+A^r_{ik}\omega_{kj}+A^s_{ij}\omega_{sr}
=:A^r_{ijk}\omega_k+A^r_{ijs}\omega_s.
\end{array}
\end{equation}
Then it is easily seen from (\ref{ATcoeff}) and (\ref{cov-T-A}) that
\begin{equation}
\begin{array}{ll}\label{cov-T-A2}
T^i_{rsj}=\langle(D_{X_j}T)_{U_s}U_r,X_i\rangle=T^i_{srj},&T^i_{rst}=\langle(D_{U_t}T)_{U_s}U_r,X_i\rangle=T^i_{srt},\\
A^r_{ijk}=\langle(D_{X_k}A)_{X_j}X_i,U_r\rangle=-A^r_{jik},&A^r_{ijs}=\langle(D_{U_s}A)_{X_j}X_i,U_r\rangle=-A^r_{jis},
\end{array}
\end{equation}
which are the only components of $DT$ and $DA$ that cannot be
recovered from $T$ and $A$ at a point (cf. \cite{Be,O}).
Taking deferential of (\ref{conn relation}) by using (\ref{cov-T-A})
and the structure equations (\ref{str-eq1}, \ref{str-eq3}) we get
\begin{equation}\label{d conn1}
(DA^r_{ij}+A^r_{ik}A^s_{jk}\omega_s+T^i_{rs}A^s_{jk}\omega_k)\wedge\omega_j
=(DT^i_{rs}-T^i_{rt}T^k_{ts}\omega_k)\wedge\omega_s+\Omega_{ir},
\end{equation}
\begin{eqnarray}\label{d conn2}
\Omega_{ij}&=&\pi^*\check{\Omega}_{ij}+(A^r_{ij}A^r_{kl}+A^r_{ik}A^r_{jl})\omega_k\wedge\omega_l\\
&&+(A^r_{ijk}-A^s_{ij}T^k_{sr}+A^s_{jk}T^i_{sr}+A^s_{ki}T^j_{sr})\omega_k\wedge\omega_r\nonumber\\
&&+(A^r_{ijs}+T^i_{ts}T^j_{tr}+A^s_{ik}A^r_{kj})\omega_s\wedge\omega_r.\nonumber
\end{eqnarray}
Recall that the O'Neill's formula $\{0\}$ in \cite{O} is just the Gauss equation
on the fibre(s) derived from the structure equations
(\ref{str-eq1}, \ref{str-eq2}) and can be written as
\begin{equation}\label{o0}
R_{rstu}=\hat{R}_{rstu}-T^i_{rt}T^i_{su}+T^i_{st}T^i_{ru}.
\end{equation}
Taking values of (\ref{d conn1}) on $U_s\wedge U_t$, $X_j\wedge U_s$
and of (\ref{d conn2}) on $U_s\wedge U_r$, $X_k\wedge U_r$ and
$X_k\wedge X_l$, respectively, we can get the O'Neill's formulas
$\{1,2,2',3,4\}$ in \cite{O} as follows:
\begin{eqnarray}
&&R_{irst}=T^i_{rts}-T^i_{rst},\label{o1}\\
&&R_{irjs}=T^i_{rsj}+A^r_{ijs}-T^i_{rt}T^j_{ts}+A^r_{ik}A^s_{jk},\label{o2}\\
&&R_{ijsr}=A^s_{ijr}-A^r_{ijs}+A^r_{ik}A^s_{kj}-A^s_{ik}A^r_{kj}+T^i_{tr}T^j_{ts}-T^i_{ts}T^j_{tr},\label{o2'}\\
&&R_{ijkr}=-A^r_{ijk}+A^s_{ij}T^k_{sr}-A^s_{jk}T^i_{sr}-A^s_{ki}T^j_{sr},\label{o3}\\
&&R_{ijkl}=\check{R}_{ijkl}\circ
\pi-2A^r_{ij}A^r_{kl}-A^r_{ik}A^r_{jl}+A^r_{il}A^r_{jk}.\label{o4}
\end{eqnarray}
 Taking value of (\ref{d
conn1}) on $X_j\wedge X_k$ we get
\begin{equation*}
R_{irjk}=A^r_{ijk}-A^r_{ikj}+2A^s_{jk}T^i_{rs},
\end{equation*}
which by combining with (\ref{cov-T-A2}, \ref{o3}) implies
\begin{equation}\label{Arijk-identity}
A^r_{ijk}+A^r_{jki}+A^r_{kij}=A^s_{ji}T^k_{sr}+A^s_{kj}T^i_{sr}+A^s_{ik}T^j_{sr}.
\end{equation}
Reversing $i$ and $j$, $r$ and $s$ in (\ref{o2}) and using
(\ref{cov-T-A2}) and the symmetry of the curvature operator, we can
get the following (cf. \cite{Be, Gr}):
\begin{equation}\label{Arijs-identity}
A^r_{ijs}+A^s_{ijr}=T^j_{rsi}-T^i_{rsj}.
\end{equation}

Let $\{K_{\alpha\beta}\}$ (resp. $\{\hat{K}_{rs}\}$;
$\{\check{K}_{ij}\}$) be the sectional curvatures of $M$ (resp. of
the fibre(s); of $B$). Then it follows from (\ref{o0}-\ref{o4}) that
\begin{equation}\label{sec-curv}
\begin{array}{lll}
K_{rs}=\hat{K}_{rs}+\sum_i\Big((T^i_{rs})^2-T^i_{rr}T^i_{ss}\Big),\\
K_{ir}=T^i_{rri}-\sum_s(T^i_{rs})^2+\sum_j(A^r_{ij})^2,\\
K_{ij}=\check{K}_{ij}\circ\pi-3\sum_r(A^r_{ij})^2,
\end{array}
\end{equation}
where, unusually, repeated indices are not summed over. If the fibres
are totally geodesic, \emph{i.e.}, $T=0$, then by
(\ref{o0}-\ref{o4}) we have the following identities about Ricci
curvatures:
\begin{equation}\label{ricci-curv}
\begin{array}{lll}
R_{ir}=A^r_{ikk}=-\langle\check{\delta}A(X_i),U_r\rangle,\\
R_{rs}=\hat{R}_{rs}+A^r_{ij}A^s_{ij},\\
R_{ij}=\check{R}_{ij}\circ\pi-2A^r_{ik}A^r_{jk}.
\end{array}
\end{equation}
Hence if $M$ is Einstein with totally geodesic fibres, then we have
\begin{equation}\label{einsteinM}
R_{ir}=A^r_{ikk}=-\langle\check{\delta}A(X_i),U_r\rangle=0,
\end{equation}
which is equivalent to that the horizontal distribution $\mathscr{H}$ is
Yang-Mills.
\subsection{Laplacians of the integrability tensor} From now on, we
assume that the Riemannian submersion $\pi: M^{n+m}\rightarrow B^n$
has totally geodesic fibres and Yang-Mills horizontal distribution,
\emph{i.e.}, $T=0$ and $A^r_{ikk}=0$ (by (\ref{einsteinM})).

We define the covariant derivatives of $A^r_{ijk}$ and $A^r_{ijs}$
by
\begin{equation}\label{Arijkl-Arijst}
\begin{array}{ll}
DA^r_{ijk}:=dA^r_{ijk}+A^r_{ljk}\omega_{li}+A^r_{ilk}\omega_{lj}+
A^r_{ijl}\omega_{lk}+A^s_{ijk}\omega_{sr}=:A^r_{ijkl}\omega_l+A^r_{ijks}\omega_s,\\
DA^r_{ijs}:=dA^r_{ijs}+A^r_{kjs}\omega_{ki}+A^r_{iks}\omega_{kj}+A^r_{ijt}\omega_{ts}
+A^t_{ijs}\omega_{tr}=:A^r_{ijsk}\omega_k+A^r_{ijst}\omega_t.
\end{array}
\end{equation}
The horizontal and vertical Laplacians of $A^r_{ij}$ are defined by
\begin{equation}\label{def-lap-A}
\triangle^{\mathscr{H}}A^r_{ij}:=A^r_{ijkk},\quad
\triangle^{\mathscr{V}}A^r_{ij}:=A^r_{ijss},
\end{equation}
while the horizontal and vertical Laplacians of a function $f\in
C^{\infty}(M)$ are defined by
\begin{equation}\label{def-lap}
\triangle^{\mathscr{H}}f:=(X_iX_i-D_{X_i}X_i)f,\quad
\triangle^{\mathscr{V}}f:=(U_sU_s-D_{U_s}U_s)f.
\end{equation}
It is easily seen that these Laplacians are well-defined and relate
to the Laplace-Beltrami operator $\triangle$ of $M$ by
\begin{equation*}
\triangle=\triangle^{\mathscr{H}}+\triangle^{\mathscr{V}}.
\end{equation*}
Moreover, since the fibres are totally geodesic,
$\triangle^{\mathscr{V}}$ is just the Laplace-Beltrami operator,
also denoted by $\triangle$, along any fibre $F_b$ when restricted
to actions on functions of $F_b$, \emph{i.e.},
\begin{equation*}
(\triangle^{\mathscr{V}}f)|_{F_b}=\triangle(f|_{F_b}),\quad
for~~any~~f\in C^{\infty}(M).
\end{equation*}
Therefore, if $M$ is closed, then for any function $f\in
C^{\infty}(M)$, we have
\begin{equation}\label{int-lap-0}
\int_M\triangle^{\mathscr{H}}f~dV_M=0,\quad
\int_M\triangle^{\mathscr{V}}f~dV_M=0.
\end{equation}

Taking differential of the second equation of (\ref{cov-T-A}) by
using (\ref{cov-T-A}, \ref{Arijkl-Arijst}) and the structure
equations (\ref{str-eq1}) we get
\begin{eqnarray}\label{dDArij}
&&DA^r_{ijk}\wedge\omega_k+DA^r_{ijs}\wedge\omega_s\\
&=&-(A^r_{hj}A^s_{hk}A^s_{il}+A^r_{ih}A^s_{hk}A^s_{jl}+A^r_{hl}A^s_{hk}A^s_{ij}+A^r_{ijs}A^s_{kl})\omega_k\wedge\omega_l\nonumber\\
&&-A^r_{ijl}A^s_{lk}\omega_k\wedge\omega_s+(A^r_{hj}\Omega_{hi}+A^r_{ih}\Omega_{hj}+A^s_{ij}\Omega_{sr}).\nonumber
\end{eqnarray}
Evaluating (\ref{dDArij}) on $X_k\wedge X_l$ and $U_s\wedge U_t$,
respectively, we obtain
\begin{eqnarray}
&&A^r_{ijlk}-A^r_{ijkl}\label{Arijkl-lk}\\
&=&-(A^r_{hj}A^s_{hk}A^s_{il}+A^r_{ih}A^s_{hk}A^s_{jl}+A^r_{hl}A^s_{hk}A^s_{ij}+2A^r_{ijs}A^s_{kl})\nonumber\\
&&+(A^r_{hj}A^s_{hl}A^s_{ik}+A^r_{ih}A^s_{hl}A^s_{jk}+A^r_{hk}A^s_{hl}A^s_{ij})\nonumber\\
&&-(A^r_{hj}R_{hikl}+A^r_{ih}R_{hjkl}+A^s_{ij}R_{srkl}),\nonumber
\end{eqnarray}
\begin{eqnarray}
A^r_{ijts}-A^r_{ijst}=-(A^r_{hj}R_{hist}+A^r_{ih}R_{hjst}+A^u_{ij}R_{urst}).\label{Arijst-ts}
\end{eqnarray}
Now since $T=0$ and $A^r_{ikk}=0$, by combining the identities
(\ref{ATcoeff}, \ref{cov-T-A2}, \ref{o2'}, \ref{o4}, \ref{Arijk-identity}, \ref{Arijs-identity}, \ref{def-lap-A})
with (\ref{Arijkl-lk}, \ref{Arijst-ts}), we can calculate the
Laplacians of the integrability tensor $A$ as follows:
\begin{eqnarray}\label{lap-h-A}
&&\langle A,\triangle^{\mathscr{H}}A\rangle:=A^r_{ij}(\triangle^{\mathscr{H}}A^r_{ij})=A^r_{ij}A^r_{ijkk}\\
&=&A^r_{ij}(-A^r_{jkik}-A^r_{kijk})=2A^r_{ij}A^r_{ikjk}=2A^r_{ij}(A^r_{ikjk}-A^r_{ikkj})\nonumber\\
&=&2A^r_{ij}\Big(-(A^r_{hk}A^s_{hk}A^s_{ij}+2A^r_{iks}A^s_{kj})+2A^r_{hk}A^s_{hj}A^s_{ik}\nonumber\\
&&\quad\quad\quad-(A^r_{hk}R_{hikj}+A^r_{ih}R_{hkkj}+A^s_{ik}R_{srkj})\Big)\nonumber\\
&=&2A^r_{ij}\Big(2A^r_{ih}A^s_{hk}A^s_{kj}+2A^r_{hk}A^s_{hj}A^s_{ik}\nonumber\\
&&\quad\quad-(A^r_{hk}\check{R}_{hikj}\circ\pi+A^r_{ih}\check{R}_{hkkj}\circ\pi)
-2A^s_{ik}R_{srkj}\Big)\nonumber\\
&=&-2\|[A^r,A^s]\|^2-A^r_{ij}A^r_{hk}\check{R}_{ijhk}\circ\pi+2A^r_{ij}A^r_{ih}\check{R}_{jh}\circ\pi-4A^r_{ij}A^s_{ik}R_{srkj},\nonumber
\end{eqnarray}
\begin{eqnarray}\label{lap-v-A}
&&\langle A,\triangle^{\mathscr{V}}A\rangle:=A^r_{ij}(\triangle^{\mathscr{V}}A^r_{ij})=A^r_{ij}A^r_{ijss}\\
&=&-A^r_{ij}A^s_{ijrs}=A^r_{ij}(A^s_{ijsr}-A^s_{ijrs})\nonumber\\
&=&-A^r_{ij}(A^s_{hj}R_{hirs}+A^s_{ih}R_{hjrs}+A^u_{ij}R_{usrs})\nonumber\\
&=&2A^r_{ij}A^s_{ik}R_{srkj}-A^r_{ij}A^s_{ij}\hat{R}_{rs},\nonumber
\end{eqnarray}
where we denote by $A^r:=(A^r_{ij})$ the $(n\times n)$
skew-symmetric matrix corresponding to the operator
$AU_r:~TM\rightarrow TM$ defined by
$AU_r(X_i):=A_{X_i}U_r=A^r_{ij}X_j$, and the square norm of the Lie
bracket in the last line of (\ref{lap-h-A}) is implicitly summed
over all the indices $r$ and $s$.

\subsection{Simons-type inequality}In this subsection we will derive
the Simons-type inequality rendered in Theorem \ref{Thm-simonstype
ineq} for Riemannian submersions with totally geodesic fibres and
Yang-Mills horizontal distributions.

We denote by $\nabla^{\mathscr{H}}$ (resp. $\nabla^\mathscr{V}$) the
restriction to the horizontal (resp. vertical) distribution of the
covariant derivative $D$ on $M$, \emph{i.e.},
$$\nabla^{\mathscr{H}}W:=(DW)|_{\mathscr{H}},\quad \nabla^\mathscr{V}W:=(DW)|_{\mathscr{V}},\quad for~~ any~~ tensor~~ W~~
on~~ M.$$ From (\ref{A norm2}, \ref{def-lap-A}, \ref{def-lap}) we can
derive the following
\begin{equation}\label{lap-hv-Anorm}
%\begin{array}{ll}
\frac{1}{2}\triangle^{\mathscr{H}}|A|^2=\langle A,\triangle^{\mathscr{H}}A\rangle+|\nabla^{\mathscr{H}}A|^2,\quad
\frac{1}{2}\triangle^{\mathscr{V}}|A|^2=\langle A,\triangle^{\mathscr{V}}A\rangle+|\nabla^{\mathscr{V}}A|^2.
%\end{array}
\end{equation}
Combining (\ref{o2'}, \ref{o3}, \ref{Arijs-identity}, \ref{Arijkl-lk}, \ref{Arijst-ts}, \ref{lap-hv-Anorm})
we obtain
\begin{eqnarray}\label{laph-lapv}
&&(\frac{1}{2}\triangle^{\mathscr{H}}+2\triangle^{\mathscr{V}})|A|^2\\
&=&-2\|[A^r,A^s]\|^2-A^r_{ij}A^r_{hk}\check{R}_{ijhk}\circ\pi+2A^r_{ij}A^r_{ih}\check{R}_{jh}\circ\pi-4A^r_{ij}A^s_{ij}\hat{R}_{rs}\nonumber\\
&&+4A^r_{ij}A^s_{ik}R_{srkj}+|A^r_{ijk}|^2+4|A^r_{ijs}|^2\nonumber\\
&=&-\|[A^r,A^s]\|^2-A^r_{ij}A^r_{hk}\check{R}_{ijhk}\circ\pi+2A^r_{ij}A^r_{ih}\check{R}_{jh}\circ\pi-4A^r_{ij}A^s_{ij}\hat{R}_{rs}\nonumber\\
&&+|R_{ijkr}|^2+|R_{srij}|^2,\nonumber
\end{eqnarray}
where, from now on, the indices within square norms are also implicitly summed over.
If $M$ is closed, then by (\ref{int-lap-0}, \ref{laph-lapv}) we get
\begin{equation}\label{int-ineq}
\int_M\Big(\|[A^r,A^s]\|^2+4A^r_{ij}A^s_{ij}\hat{R}_{rs}+A^r_{ij}A^r_{hk}\check{R}_{ijhk}\circ\pi-2A^r_{ij}A^r_{ih}\check{R}_{jh}\circ\pi\Big)dV_M\geq0.
\end{equation}

As defined before Theorem \ref{Thm-simonstype ineq} in Section \ref{sec1}, for $x\in M$, $\check{\kappa}(x)$ is the largest eigenvalue of
the curvature operator $\check{R}$ of $B$ at $\pi(x)\in B$,
$\check{\lambda}(x)$ is the lowest eigenvalue of the Ricci curvature
$\check{r}$ of $B$ at $\pi(x)\in B$ and $\hat{\mu}(x)$ is the
largest eigenvalue of the Ricci curvature $\hat{r}$ of the fibre at
$x$. Then the inequality (\ref{int-ineq}) induces the following:
\begin{equation}\label{int-ineq2}
\int_M\Big(\|[A^r,A^s]\|^2+4\hat{\mu}|A|^2+2\check{\kappa}|A|^2-2\check{\lambda}|A|^2\Big)dV_M\geq0.
\end{equation}

When $n=2$, it is obvious that $[A^r,A^s]=0$ and
$\check{\kappa}=\check{\lambda}$.
Thus by (\ref{int-ineq2}) we have
\begin{equation*}\label{ineq-mu }
\int_M|A|^2\hat{\mu}~dV_M\geq0,
\end{equation*}
 which verifies the first case (i) of Theorem
\ref{Thm-simonstype ineq}.

When $m=1$, the first two terms of
(\ref{int-ineq2}) vanish and thus
\begin{equation*}\label{ineq-kappa-lam}
  \int_M|A|^2(\check{\kappa}-\check{\lambda})~dV_M\geq0,
\end{equation*}
which proves the second case (ii) of Theorem \ref{Thm-simonstype
ineq}.

The last two cases (iii, iv) of Theorem \ref{Thm-simonstype ineq} can be derived immediately by applying the inequalities (i, ii) of Theorem \ref{thm1} to (\ref{int-ineq2}) respectively. This interaction originally occurs between the DDVV inequality (\ref{DDVV sym}) and the Simons integral inequality as we mentioned in the introduction.

\subsection{Equality conclusions}
In this subsection we will complete the proof of Theorem
\ref{Thm-simonstype ineq} by verifying the conclusions (a-d) for
equality conditions of the Simons-type inequality case by case.

Firstly, it is a well-known fact that the total space $M$ of a
Riemannian submersion with vanishing $T$ and $A$ is (at least
locally) a Riemannian product $B\times F$, and vice versa.
Henceforth, we assume that $A\neq0$. The proof of (a-d) of Theorem
\ref{Thm-simonstype ineq} goes on as follows:

\textbf{(a)} In each case of (i-iv) of Theorem
\ref{Thm-simonstype ineq}, the equality assumption of the integral inequality compels (\ref{int-ineq}) to attain its equality simultaneously,
which then by (\ref{int-lap-0}, \ref{laph-lapv}) shows immediately
\begin{equation}\label{Rijkr-Rsrij-0}
R_{ijkr}\equiv0,\quad R_{srij}\equiv0.
\end{equation}
Now since the fibres are totally geodesic, the Ricci equation on any
fibre $F_b$ shows that the normal curvature $\hat{R}^{\bot}_{srij}$
of $F_b$ equals $R_{srij}$ and thus vanishes. So each fibre has flat
normal bundle in $M$. Moreover, it follows from
(\ref{o2'}, \ref{o3}, \ref{Arijs-identity}, \ref{Rijkr-Rsrij-0}) that
\begin{equation}\label{Arijk0-Arijs}
A^r_{ijk}=0,\quad A^r_{ijs}=\frac{1}{2}[A^r,A^s]_{ij}.
\end{equation}
Noticing that the covariant derivative of $|A|^2$ can be calculated
from (\ref{Arijk0-Arijs}) as
\begin{equation*}
D|A|^2=2A^r_{ij}A^r_{ijk}\omega_k+2A^r_{ij}A^r_{ijs}\omega_s=0,
\end{equation*}
we arrive at the conclusion that $|A|^2\equiv Const=:C>0$. Then by
(\ref{lap-h-A}-\ref{lap-hv-Anorm}) and
(\ref{Rijkr-Rsrij-0}, \ref{Arijk0-Arijs}), we have
\begin{eqnarray}
&&\frac{1}{2}\triangle^{\mathscr{H}}|A|^2=
-2\|[A^r,A^s]\|^2-A^r_{ij}A^r_{hk}\check{R}_{ijhk}\circ\pi+2A^r_{ij}A^r_{ih}\check{R}_{jh}\circ\pi\equiv0,\label{laph-0}\\
&&\frac{1}{2}\triangle^{\mathscr{V}}|A|^2=
-A^r_{ij}A^s_{ij}\hat{R}_{rs}+\frac{1}{4}\|[A^r,A^s]\|^2\equiv0.\label{lapv-0}
\end{eqnarray}
Now we come to prove the subcases (a1-a4) of (a) as follows.
\begin{itemize}
\item[(a1)] Now $n=2$ and $[A^r,A^s]\equiv0$. So by the
definition of $\hat{\mu}$ and (\ref{lapv-0}), we get
\begin{equation*}
|A|^2\hat{\mu}\geq A^r_{ij}A^s_{ij}\hat{R}_{rs}=0,
\end{equation*}
whereas $|A|^2\equiv C>0$ and $\int_M|A|^2\hat{\mu}dV_M=0$ by
assumption. \\This proves that $\hat{\mu}\equiv0$.
\item[(a2)] Now $m=1$ and $[A^r,A^s]\equiv0$. So by the
definitions of $\check{\kappa}$, $\check{\lambda}$  and
(\ref{laph-0}), we get
\begin{equation*}
|A|^2(\check{\kappa}-\check{\lambda})\geq
\frac{1}{2}A^r_{ij}A^r_{hk}\check{R}_{ijhk}\circ\pi-A^r_{ij}A^r_{ih}\check{R}_{jh}\circ\pi=0,
\end{equation*}
whereas $|A|^2\equiv C>0$ and
$\int_M|A|^2(\check{\kappa}-\check{\lambda})dV_M=0$ by assumption.
\\This proves that $\check{\kappa}-\check{\lambda}\equiv0$.
\item[(a3)] Now the equality assumption implies that the inequality
in (i) of Theorem \ref{thm1} (with $B_r=A^r$) attains its equality,
\emph{i.e.},
\begin{equation}\label{equ-Lie}
\sum_{r,s}\|[A^r,A^s]\|^2=\frac{1}{3}\Big(\sum_r|A^r|^2\Big)^2=\frac{1}{3}|A|^4=\frac{1}{3}C^2.
\end{equation}
Then by the definitions of $\hat{\mu},\check{\kappa},\check{\lambda}$ and (\ref{laph-0}, \ref{lapv-0}), we have
\begin{eqnarray}
&&|A|^2\hat{\mu}\geq
A^r_{ij}A^s_{ij}\hat{R}_{rs}=\frac{1}{4}\|[A^r,A^s]\|^2=\frac{1}{12}C^2,\nonumber\\
&&|A|^2(\check{\kappa}-\check{\lambda})\geq
\frac{1}{2}A^r_{ij}A^r_{hk}\check{R}_{ijhk}\circ\pi-A^r_{ij}A^r_{ih}\check{R}_{jh}\circ\pi=-\|[A^r,A^s]\|^2=-\frac{1}{3}C^2,\nonumber
\end{eqnarray}
whereas $|A|^2\equiv C>0$ and
$\int_M~|A|^2(\frac{1}{6}|A|^2+2\hat{\mu}+\check{\kappa}-\check{\lambda})~dV_M=0$
by assumption.\\
This proves that $\hat{\mu}\equiv\frac{1}{12}C$,
$\check{\kappa}-\check{\lambda}\equiv-\frac{1}{3}C$.
\item[(a4)] The proof is almost the same with that of (a3) except
for that the coefficient $\frac{1}{3}$ in (\ref{equ-Lie}) would be substituted by $\frac{2}{3}$. So we omit it
here.
\end{itemize}

\textbf{(b)} If the equality in (iii) (resp. (iv)) holds, as in the
proof of (a3), the inequality in (i) (resp. (ii)) of Theorem
\ref{thm1} (with $B_r=A^r$) attains its equality, thereby, under
some $K=O(n)\times O(m)$ action which can be realized by a choice of
an orthonormal horizontal basis $\{X_1,\cdots,X_n\}$ and of an
orthonormal vertical basis $\{U_{n+1},\cdots,U_{n+m}\}$, the
matrices $A^r$'s are all equal to zero except
$A^{n+1},A^{n+2},A^{n+3}$, which are in the forms of $C_1,C_2,C_3$
(resp. $diag(D_1, 0), diag(D_2, 0), diag(D_3, 0)$). Noticing that
now we have $$|A|^2=|A^{n+1}|^2+|A^{n+2}|^2+|A^{n+3}|^2\equiv C>0,$$
we derive that $m\geq3$. Moreover, we can rewrite
$A^{n+1},A^{n+2},A^{n+3}$ as follows:
\begin{equation}\label{case3A123}
\begin{array}{cc}
A^{n+1}=\sqrt{\frac{C}{6}}\left(\begin{array}{ccc}
0& 1 & 0 \\
-1& 0 & 0\\
 0& 0& 0
\end{array}\right),& A^{n+2}=\sqrt{\frac{C}{6}}\left(\begin{array}{ccc}
0& 0 & 1 \\
0& 0 & 0\\
-1& 0& 0
\end{array}\right),\\
 A^{n+3}=\sqrt{\frac{C}{6}}\left(\begin{array}{ccc}
0& 0 & 0 \\
0& 0 & 1\\
0& -1& 0
\end{array}\right)& \emph{for equality case of (iii);}
\end{array}
\end{equation}

\begin{equation}\label{case4A123}
\begin{array}{cc}
A^{n+1}=\sqrt{\frac{C}{12}}\left(\begin{array}{c|c}
\begin{smallmatrix}
0& 1 & 0&0 \\
-1& 0 & 0&0\\
 0& 0& 0&1\\
 0&0&-1&0
\end{smallmatrix}&0\\
 \hline0&0
\end{array}\right),& A^{n+2}=\sqrt{\frac{C}{12}}\left(\begin{array}{c|c}
\begin{smallmatrix}
0& 0 & 1 &0\\
0& 0 & 0&-1\\
-1& 0& 0&0\\
0&1&0&0
\end{smallmatrix}&0\\
 \hline0&0
\end{array}\right),\\
A^{n+3}=\sqrt{\frac{C}{12}}\left(\begin{array}{c|c}
\begin{smallmatrix}
0& 0 & 0 &1\\
0& 0 & 1&0\\
0& -1& 0&0\\
-1&0&0&0
\end{smallmatrix}&0\\
 \hline0&0
\end{array}\right)& \emph{for
equality case of (iv),}
\end{array}
\end{equation}
where $0$ in the diagonals of (\ref{case4A123}) is a zero matrix of order $(n-4)$. As in
the proof of (a3), we have the following equations if the equality
in (iii) or (iv) holds:
\begin{equation}\label{case34-equs}
|A|^2\hat{\mu}= A^r_{ij}A^s_{ij}\hat{R}_{rs},\quad
|A|^2\check{\kappa}=
\frac{1}{2}A^r_{ij}A^r_{hk}\check{R}_{ijhk}\circ\pi,\quad
|A|^2\check{\lambda}=A^r_{ij}A^r_{ih}\check{R}_{jh}\circ\pi.
\end{equation}

Using the formulas (\ref{case3A123}) for equality case of (iii), the
equations (\ref{case34-equs}) can be turned to the following:
\begin{equation*}
\begin{array}{lll} \hat{\mu}=
\frac{1}{3}(\hat{R}_{n+1~n+1}+\hat{R}_{n+2~n+2}+\hat{R}_{n+3~n+3}),\\
\check{\kappa}=
\frac{1}{3}(\check{R}_{1212}\circ\pi+\check{R}_{1313}\circ\pi+\check{R}_{2323}\circ\pi),\\
\check{\lambda}=\frac{1}{3}(\check{R}_{11}\circ\pi+\check{R}_{22}\circ\pi+\check{R}_{33}\circ\pi).
\end{array}
\end{equation*}
Then recalling the definitions of
$\hat{\mu},\check{\kappa},\check{\lambda}$, we obtain the following
decompositions for $\hat{r},\check{R},\check{r}$ for equality case
of (iii):
\begin{equation*}
\hat{r}=\hat{\mu}I_3\oplus \hat{r}',\quad
\check{R}\equiv\check{\kappa}I_3,\quad
\check{r}\equiv\check{\lambda}I_3,
\end{equation*}
where $\hat{r}'=\hat{r}|_{span\{U_{7},\cdots,U_{3+m}\}}$ if $m\geq4$
and $0$ if $m=3$, $\check{\lambda}=2\check{\kappa}$ because of $n=3$
now.

Similarly, using the formulas (\ref{case4A123}) for equality case of
(iv) and the first Bianchi identity, the equations
(\ref{case34-equs}) can be turned to the following:
\begin{equation*}
\begin{array}{lll} \hat{\mu}=
\frac{1}{3}(\hat{R}_{n+1~n+1}+\hat{R}_{n+2~n+2}+\hat{R}_{n+3~n+3}),\\
\check{\kappa}=
\frac{1}{6}(\check{R}_{1212}\circ\pi+\check{R}_{1313}\circ\pi+\check{R}_{1414}\circ\pi
+\check{R}_{2323}\circ\pi+\check{R}_{2424}\circ\pi+\check{R}_{3434}\circ\pi),\\
\check{\lambda}=\frac{1}{4}(\check{R}_{11}\circ\pi+\check{R}_{22}\circ\pi+\check{R}_{33}\circ\pi+\check{R}_{44}\circ\pi).
\end{array}
\end{equation*}
Then recalling the definitions of
$\hat{\mu},\check{\kappa},\check{\lambda}$, we obtain the following
decompositions for $\hat{r},\check{R},\check{r}$ for equality case
of (iv):
\begin{equation*}
\hat{r}=\hat{\mu}I_3\oplus \hat{r}',\quad
\check{R}=\check{\kappa}I_6\oplus \check{R}',\quad
\check{r}\equiv\check{\lambda}I_4\oplus\check{r}',
\end{equation*}
where $\hat{r}'=\hat{r}|_{span\{U_{n+4},\cdots,U_{n+m}\}}$ if
$m\geq4$ and $0$ if $m=3$, $\check{R}'=\check{R}|_{span\{X_i\wedge
X_j|1\leq i\leq n,~5\leq j\leq n\}}$ and
$\check{r}'=\check{r}|_{span\{X_5,\cdots,X_n\}}$ if $n\geq5$ and $0$
if $n=4$.

From the decompositions, if $m=3$, then we can see that the
$3$-dimensional fibres have constant Ricci curvature and thus have
constant sectional curvature; if $n=3$ or $4$, then the base
manifold $B^n$ has constant sectional curvature; if
$n=5$, then by the definitions of $\check{\kappa},\check{\lambda}$ we
have
\begin{eqnarray}
&&\check{\lambda}\leq\check{R}_{55}=\check{R}_{1515}+\check{R}_{2525}+\check{R}_{3535}+\check{R}_{4545}\leq
3\check{\kappa}+\check{R}_{i5i5},\nonumber\\
&&\check{\lambda}=\check{R}_{ii}=\sum_{j=1}^5\check{R}_{ijij}=3\check{\kappa}+\check{R}_{i5i5},\quad
for~~ i=1,2,3,4.\nonumber
\end{eqnarray}
These prove that $\check{R}_{i5i5}=\check{\kappa}$ for $i=1,2,3,4$,
and so the base manifold $B^5$ has constant sectional curvature.

\textbf{(c)} Now $m=3,n=3$ and the equality in (iii) holds. In (b)
we have proved that both of the fibres and the base manifold $B^3$
have constant sectional curvature. Due to a result of Hermann
\cite{He} we see that the fibres are all isometric. Reset $|A|^2\equiv C=:24a>0$, then by (a3) and (b) we get
$$\hat{\mu}=2a,\quad \check{\lambda}=2\check{\kappa}=16a,$$
which deduce the conclusions of (c1) and (c2).

The identities in (c3) can be
calculated from the formulas (\ref{sec-curv}, \ref{ricci-curv}, \ref{case3A123}). In fact, since we have $T=0$ and $A_{ikk}=0$, the formulas
(\ref{sec-curv}, \ref{ricci-curv}) turn into the following:
\begin{equation}\label{K_MR_M}
\begin{array}{ll}
K_{rs}=\hat{K}_{rs},\quad K_{ir}=\sum_j(A^r_{ij})^2,\quad K_{ij}=\check{K}_{ij}\circ\pi-3\sum_r(A^r_{ij})^2;\\
R_{ir}=0,\quad R_{rs}=\hat{R}_{rs}+A^r_{ij}A^s_{ij},\quad R_{ij}=\check{R}_{ij}\circ\pi-2A^r_{ik}A^r_{jk}.
\end{array}
\end{equation}
Then using formulas (\ref{case3A123}, \ref{K_MR_M}) and the known facts that $\hat{K}_{rs}=a$, $\check{K}_{ij}=8a$, we complete the proof.
One should notice that the index range for $r$ in (c3) is $\{1,2,3\}$ rather than $\{n+1,n+2,n+3\}$ $(n=3)$ here.

\textbf{(d)} Based on results of (b) and formulas (\ref{case4A123}, \ref{K_MR_M}),
the proof of the assertions for (d2) and the heading paragraph of
(d) are exactly the same with that of (c) despite that we reset
$|A|^2\equiv C=:12a>0$ here in view of (a4). As for (d1), we first
calculate the sectional curvatures of $B^4$ and $M^7$ respectively
and find that $B$ has constant sectional curvature $4a$ and $M$ has
constant sectional curvature $a$. In fact, by (a4), (b) and (\ref{case4A123}, \ref{K_MR_M}) we know that
$$\hat{\mu}=2a,\quad \check{\lambda}=3\check{\kappa}=12a,\quad K_{rs}=K_{ir}=K_{ij}=a.$$

Hence, $M^7$ is covered by $S^7(\frac{1}{\sqrt{a}})$, $B^4$ is covered by $S^4(\frac{1}{2\sqrt{a}})$ and we denote by $\pi_1, \pi_2$ the corresponding covering maps. Thus there is a Riemannian submersion $\pi_0: S^7(\frac{1}{\sqrt{a}})\rightarrow S^4(\frac{1}{2\sqrt{a}})$ (lift map of $\pi\circ\pi_1$ through $\pi_2$) such that $\pi_2\circ\pi_0=\pi\circ\pi_1$. Recall that Ranjan \cite{Ra} showed that $\pi_0: S^7(\frac{1}{\sqrt{a}})\rightarrow S^4(\frac{1}{2\sqrt{a}})$ is equivalent to the Hopf fibration (see also \cite{Es}). Without loss of generality, we can assume that $\pi_0$ is just the Hopf fibration, since otherwise we can alter $\pi_1, \pi_2$ by taking compositions with corresponding isometries (bundle isometry between $\pi_0$ and the Hopf fibration) of $S^7(\frac{1}{\sqrt{a}})$ and $S^4(\frac{1}{2\sqrt{a}})$ respectively. The proof of (d1) is now completed.

In conclusion, the proof of Theorem \ref{Thm-simonstype ineq} is now completed.

%%%%%%%%%%%%%%%%%%%%%%%%%%%%%%%%%%%%%%%%%%%%%%%%%%%%%%%%%%%%%%%%%%%%%%%%%%%%%%%%%%%%%%%%%%%%%%%%%
\begin{ack}
I would like to thank Professors Thomas E. Cecil, Qingming Chen and
Weiping Zhang for their kindly encouragements and supports. Many thanks also to Professors Xiuxiong Chen, Zhiqin Lu and Yibin Shen for their useful suggestions and discussions.
\end{ack}

\end{document}